\newcommand{\into}{\hookrightarrow}
\newcommand{\abs}[1]{\left\lvert#1\right\rvert}
\newcommand{\Z}{\ensuremath{\mathbb{Z}}}
\newcommand{\C}{\ensuremath{\mathbb{C}}}
\newcommand{\Q}{\ensuremath{\mathbb{Q}}}
\newcommand{\M}{\mathcal{M}}
\renewcommand{\P}{\ensuremath{\mathbb{P}}}
\renewcommand{\H}{\ensuremath{\mathcal{H}}}
\renewcommand{\bar}[1]{\overline{#1}}
\DeclareMathOperator{\moduli}{moduli}
\DeclareMathOperator{\hurwitz}{hurwitz}
\DeclareMathOperator{\Id}{Id}
\DeclareMathOperator{\Aut}{Aut}
\DeclareMathOperator{\pr}{pr}
\DeclareMathOperator{\PI}{PI}
\newcommand{\cH}{\mathcal{H}}
\newcommand{\cJ}{\mathcal{J}}
\newcommand{\cM}{\mathcal{M}}
\newcommand{\bA}{\mathbf{A}}
\newcommand{\bB}{\mathbf{B}}
\newcommand{\bP}{\mathbf{P}}
\newcommand{\bQ}{\mathbf{Q}}
\newcommand{\bS}{\mathbf{S}}
\theoremstyle{plain}
\newtheorem{theorem}{Theorem}
\numberwithin{theorem}{section}
\newtheorem{thm}[theorem]{Theorem}
\newtheorem{prop}[theorem]{Proposition}
\newtheorem{cor}[theorem]{Corollary}
\newtheorem{lem}[theorem]{Lemma}
\theoremstyle{definition}
\newtheorem{Definition/Theorem}[theorem]{Definition/Theorem}
\newtheorem{Definition/Proposition}[theorem]{Definition/Proposition}
\newtheorem{Def}[theorem]{Definition}
\newtheorem{Corollary/Definition}[theorem]{Corollary/Definition}
\newtheorem{observation}[theorem]{Observation}
\theoremstyle{remark}
\newtheorem{rem}[theorem]{Remark}
\renewcommand{\H}{\cH}
\renewcommand{\M}{\cM}
\newcommand{\Hbar}{\bar{\cH}}
\newcommand{\Mbar}{\bar{\cM}}
\newcommand{\Hfull}{\cH^{\mathrm{full}}}
\newcommand{\Afull}{\bA^{\mathrm{full}}}
\newcommand{\full}{\mathrm{full}}
\renewcommand{\rm}{\mathrm{rm}}
\renewcommand{\setminus}{\smallsetminus}
\DeclareMathOperator{\Th}{Th}
\DeclareMathOperator{\Pn}{\C\P^{\abs{\bP}-3}}
\DeclareMathOperator{\new}{new}
\newcommand{\rat}{
\tikz[minimum height=2ex]
  \path[dashed,->]
   node (a)            {}
   node (b) at (1em,0) {}
  ($(a.center)+(0,-.07)$) edge ($(b.center)+(0,-.07)$);
}
\newcommand{\ratt}{
\tikz[minimum height=2ex]
  \path[dashed,->]
   node (a)            {}
   node (b) at (1em,0) {}
  ($(a.center)+(0,.0)$) edge ($(b.center)+(0,.0)$)
  ($(a.center)+(0,-.14)$) edge ($(b.center)+(0,-.14)$);
}
\newcommand{\multi}{
\tikz[minimum height=2ex]
  \path[->]
   node (a)            {}
   node (b) at (1em,0) {}
  ($(a.center)+(0,.0)$) edge ($(b.center)+(0,.0)$)
  ($(a.center)+(0,-.14)$) edge ($(b.center)+(0,-.14)$);
}
\DeclareMathOperator{\br}{br}
\DeclareMathOperator{\Gr}{Gr}
\newcommand{\Pfull}{\mathbf{P}^{\mathrm{full}}}
\begin{document}

\title{Dynamical degrees of Hurwitz correspondences}
\author{Rohini Ramadas}
\email{ramadas@math.harvard.edu}
\address{Department of Mathematics\\Harvard University\\Cambridge, MA}
\thanks{This work was partially supported by NSF grants
0943832, 1045119, 1068190, and 1703308.}
\subjclass[2010]{14H10 (primary), 14N99, 14M99, 37F05} 

\begin{abstract}
  Let $\phi$ be a post-critically finite branched covering of a two-sphere. By \cite{Koch2013}, the Thurston pullback map induced by $\phi$ on Teichm\"uller space descends to a multi-valued self-map --- a Hurwitz correspondence $\H_{\phi}$ --- of the moduli
  space $\M_{0,\bP}$. We study the dynamics of Hurwitz
  correspondences via numerical invariants called \emph{dynamical
    degrees}. We show that the sequence of dynamical degrees of $\H_{\phi}$ is always non-increasing, and the behavior of this sequence is constrained by the behavior of $\phi$ at and near points of its post-critical set. 
  \end{abstract}
\maketitle

\section{Introduction}\label{sec:Intro}

Denote by $S^2$ the oriented two-sphere. Suppose $\phi:S^2\to S^2$ is an orientation-preserving branched covering whose post-critical set
$$\bP:=\{\phi^n(x)|\text{ $x$ is a critical point of $\phi$ and $n>0$}\}$$
is finite. Then $\phi$ is called \emph{post-critically finite}. The topological dynamics of $\phi$ induce holomorphic and algebraic dynamical systems:
\begin{enumerate}[label=(\Roman*)]
\item Thurston \cite{DouadyHubbard1993}: a holomorphic, contracting self-map $\Th_{\phi}:\mathcal{T}_{S^2,\bP}\to\mathcal{T}_{S^2,\bP}$ of the Teichm\"uller space of complex structures on $S^2$ punctured at $\bP$. This is known as the \emph{Thurston pullback map}, and it descends to \label{item:Thurst}
\item Koch \cite{Koch2013}: an algebraic, multivalued self-map $\H_{\phi}:\M_{0,\bP}\multi\M_{0,\bP}$ of the moduli space of markings of $\C\P^1$ by $\bP$. Such a multivalued map is called a \emph{Hurwitz correspondence}.  \label{item:HC}
\end{enumerate}
In addition, if 
\begin{enumerate}
\item $\bP$ contains a periodic and fully ramified point $p_0$ of $\phi$, and  \label{item:KC1}
\item either every other critical point of $\phi$ is also periodic or there is exactly one other critical point of $\phi$, \label{item:KC2}
\end{enumerate}
then we also have
\begin{enumerate}[label=(\Roman*)]
\setcounter{enumi}{2}
\item Koch \cite{Koch2013}: a meromorphic, single-valued map $\H_{\phi}^{-1}:\M_{0,\bP}\rat\M_{0,\bP}$. \label{item:SVM}
\end{enumerate}

The branched covering $\phi$ is conjugate, up to homotopy, to a post-critically finite rational map on $\C\P^1$ if and only if $\Th_{\phi}$ has a fixed point. There is a tremendous amount of current research investigating the dynamics of \ref{item:Thurst}. Koch introduced \ref{item:HC} and \ref{item:SVM} as algebraic dynamical systems that `shadow' the holomorphic dynamics of \ref{item:Thurst}. 

Dynamical degrees are numerical invariants associated to algebraic dynamical systems; they measure complexity of iteration. Let $X^{\circ}$ be a smooth quasiprojective variety and $g:X^{\circ}\rat X^{\circ}$ (resp. $g:X^{\circ}\ratt X^{\circ}$) a meromorphic map (resp. a meromorphic multi-valued map). Fix a smooth projective birational model $X$ of $X^{\circ}$ and an ample class $\mathfrak{h}\in H^{1,1}(X)$. The $k$th dynamical degree of $g$ is defined to be the non-negative real number
\begin{align*}
  \Theta_k(g)=\lim_{n\to\infty}\left(((g^n)^*(\mathfrak{h}^k))\cdot(\mathfrak{h}^{\dim
      X-k})\right)^{1/n},
\end{align*}
The above limit exists and is independent of $X$ and
$\mathfrak{h}$ (Dinh and Sibony \cite{DinhSibony2005,DinhSibony2008} and Truong \cite{Truong2015,Truong2016}). The $k$-th dynamical degree of $g$ measures the `asymptotic growth rate of the degrees of codimension-$k$ subvarieties of $X^{\circ}$ under iterates of $g$' --- amazingly, this is a well-defined notion although the degree of a subvariety of $X^{\circ}$ is not well-defined. 

Now, since $\phi:\bP\to\bP$ is a self-map of a finite set, every point eventually maps into a periodic cycle. We define the \emph{polynomiality index} of $\phi$ to be the positive real number
$$\PI(\phi):=\max_{\{p\in\bP, \ell>0 | \phi^{\ell}(p)=p\}}\left(\Pi_{i=0}^{\ell-1}(\text{local degree of $\phi$ at $\phi^i(p)$})\right)^{1/\ell}.$$    
In fact, $\PI(\phi)$ is the maximum, over \emph{all} periodic cycles of $\phi$ on $S^2$, of the geometric mean of the local degrees of $\phi$ at all the points in the cycle. 

\medskip
\noindent \textsc{Theorem \ref{thm:DynamicalDegreesNonincreasing}.} \textit{For} $k=0,\ldots,\abs{\bP}-4$, \textit{we have} $\Theta_k(\H_{\phi})\ge \PI(\phi)\cdot\Theta_{k+1}(\H_{\phi})$.

\medskip

Thus the behavior of the sequence of dynamical degrees of $\H_{\phi}$ is constrained by the behavior of $\phi$ at and near points of $\bP$. Note that $1\le\PI(\phi)\le\deg(\phi)$ always holds. $\PI(\phi)=1$ if and only if no critical point of $\phi$ is periodic, i.e. if every critical point is strictly pre-periodic. $\PI(\phi)=\deg(\phi)$ if and only if $\bP$ contains a point $p_0$ that is fixed by and fully ramified under either $\phi$ or $\phi^2$, i.e. if and only if either $\phi$ or $\phi^2$ is a topological polynomial. 

\begin{cor}\label{cor:Decrease}
$\Theta_k(\H_{\phi})$ decreases as $k$ increases, strictly if $\phi$ has a periodic critical point. 
\end{cor}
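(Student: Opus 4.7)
The plan is to deduce Corollary \ref{cor:Decrease} directly from Theorem \ref{thm:DynamicalDegreesNonincreasing} together with two elementary observations about $\PI(\phi)$.

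For the non-increasing part, observe first that $\PI(\phi) \ge 1$: by definition it is a maximum over periodic cycles of geometric means of positive integers (the local degrees of $\phi$). Substituting $\PI(\phi) \ge 1$ into the inequality supplied by Theorem \ref{thm:DynamicalDegreesNonincreasing} immediately gives
\[
\Theta_k(\H_\phi) \;\ge\; \PI(\phi)\cdot\Theta_{k+1}(\H_\phi) \;\ge\; \Theta_{k+1}(\H_\phi)
\]
for every $k = 0,\dots,\abs{\bP}-4$, which is exactly the weak monotonicity claim.

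For the strict-decrease assertion, suppose $\phi$ has a periodic critical point $p$ of period $\ell$. Then the local degree of $\phi$ at $p$ is at least $2$, and the $\ell-1$ other local degrees around the cycle of $p$ are positive integers, so the geometric mean of local degrees around this cycle is at least $2^{1/\ell} > 1$. Taking the maximum over all cycles yields $\PI(\phi) > 1$. Theorem \ref{thm:DynamicalDegreesNonincreasing} then forces the strict inequality $\Theta_k(\H_\phi) > \Theta_{k+1}(\H_\phi)$, provided we also know $\Theta_{k+1}(\H_\phi) > 0$.

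The one remaining step is this positivity, which I would bootstrap from the non-increasing property just proved. The top dynamical degree $\Theta_{\abs{\bP}-3}(\H_\phi)$ equals the topological degree of the correspondence $\H_\phi$, which is a positive integer (the generic fiber count of the multivalued map), so
\[
\Theta_{k+1}(\H_\phi) \;\ge\; \Theta_{\abs{\bP}-3}(\H_\phi) \;>\; 0
\]
for every $k+1 \le \abs{\bP}-3$. I do not expect a serious obstacle here: the substantive content of the corollary is contained entirely in Theorem \ref{thm:DynamicalDegreesNonincreasing}, and the rest is unwinding the definition of $\PI(\phi)$ and checking positivity of the top-degree count of the Hurwitz correspondence, which follows from Koch's construction.
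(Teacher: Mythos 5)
Your proposal is correct and follows essentially the same route as the paper, which states the corollary as an immediate consequence of Theorem \ref{thm:DynamicalDegreesNonincreasing} together with the observations that $\PI(\phi)\ge 1$ always and $\PI(\phi)=1$ exactly when no critical point is periodic. Your additional check that $\Theta_{k+1}(\H_{\phi})>0$ (via the top dynamical degree being the positive integer $\deg(\pi_2)$) is a detail the paper leaves implicit, and it is handled correctly.
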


The dynamical degrees of $\H_{\phi}$ are algebraic integers \cite{Ramadas2015}. As a parallel to Corollary \ref{cor:Decrease}, the results in \cite{Ramadas2015} show that for $k>0$, the degree over $\Q$ of $\Theta_k$ `likely' decreases as $k$ increases. More precisely, there is an upper bound for the degree over $\Q$ of $\Theta_k$ that decreases as $k$ increases. In spite of the parallel, the methods used in this paper are very different from those used in \cite{Ramadas2015}. 

\subsection{Implications when \texorpdfstring{$\H_{\phi}^{-1}$}{H\_{}phi\^{}(-1)} is single-valued}

Dynamical degrees have been studied primarily in the context of single-valued maps. The topological entropy of a holomorphic single-valued map was found to be equal to the logarithm of its largest dynamical degree (Yomdin \cite{Yomdin1987} and Gromov \cite{Gromov2003}). The topological entropy of a meromorphic single-valued map is bounded from above by the logarithm of its largest dynamical degree (Dinh and Sibony \cite{DinhSibony2005}); equality is conjectured. If $g$ is a single-valued map, either holomorphic or meromorphic, its $0$-th dynamical degree is $1$ and its top dynamical degree is its topological degree. Guedj \cite{Guedj2005} found that a map whose top dynamical degree is its largest has especially good ergodic properties. (See Corollary \ref{cor:Guedj} for the implications in the context of this paper.) 

If $g:\C\P^N\to\C\P^N$ is a holomorphic map given in coordinates by homogeneous polynomials of degree $d$, its $k$-th dynamical degree is $d^k$. Thus $k\mapsto\log(\Theta_k(g))$ is linear with slope $d$, and the top ($N$-th) dynamical degree of $g$ is its largest. If $g:\C\P^N\rat\C\P^N$ or $g:X\rat X$ is a meromorphic map, $k\mapsto\log(\Theta_k(g))$ is known to be concave. Thus the top dynamical degree of $g$ is its largest if and only if $k\mapsto\Theta_k(g)$ is strictly increasing.

Koch and Roeder \cite{KochRoeder2015} studied the dynamical degrees of $\H_{\phi}^{-1}$ in the special case that $\phi$ has exactly two critical points, both periodic. They showed in this case that $\Theta_k(\H_{\phi}^{-1})$ is the absolute value of the largest eigenvalue of the induced pullback action on $H^{k,k}(\Mbar_{0,\bP})$, where $\Mbar_{0,\bP}$ is the Deligne-Mumford compactification of $\M_{0,\bP}$. Koch \cite{Koch2013} studied the maps $\H_{\phi}^{-1}$ as meromorphic self-maps of $\Pn$, another compactification of $\M_{0,\bP}$. Koch found that if (\ref{item:KC1}) and (\ref{item:KC2}) hold, and, in addition, the special point $p_0$ is fixed by $\phi$, i.e. $\phi$ is a topological polynomial, then $\H_{\phi}^{-1}:\Pn\rat\Pn$ is holomorphic, given in coordinates by homogeneous polynomials of degree equal to the topological degree of $\phi$. Thus if $\phi$ is a topological polynomial, $\Theta_k(\H_{\phi})=\deg(\phi)^k$. Koch showed that in this case $\H_{\phi}^{-1}:\Pn\to\Pn$ is also \emph{critically finite}. 

Fix $\phi$ of topological degree $d>1$ such that (\ref{item:KC1}) and (\ref{item:KC2}) hold. If there are two fully ramified points of $\phi$ in periodic cycles, then pick $p_0$ to be one with minimal cycle length. Set $\ell_0$ to be the length of the cycle containing $p_0$; then $\PI(\phi)\ge d^{1/\ell_0}.$ Since $\H_{\phi}^{-1}$ is single-valued, its $0$-th dynamical degree is $1$; the results in \cite{Koch2013} imply that its top dynamical degree/topological degree is $d^{\abs{\bP}-3}$. It follows from definitions that $\Theta_k(\H_{\phi}^{-1})=\Theta_{\abs{\bP}-3-k}(\H_{\phi})$. We obtain from Theorem \ref{thm:DynamicalDegreesNonincreasing}:

\begin{cor} \label{cor:SingleValued} The dynamical degrees of $\H_{\phi}^{-1}$ satisfy
\begin{align*}
(d^{1/\ell_0})^{\abs{\bP}-3}&=(d^{1/\ell_0})^{\abs{\bP}-3}\cdot\Theta_0(\H_{\phi}^{-1})\\&\le(d^{1/\ell_0})^{\abs{\bP}-4}\cdot\Theta_1(\H_{\phi}^{-1})\\
&\le\quad\quad\cdots\\
&\le\Theta_{\bP-3}(\H_{\phi}^{-1})=\deg(\H_{\phi}^{-1})=d^{\abs{\bP}-3}
\end{align*}
\end{cor}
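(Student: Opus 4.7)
The plan is to obtain this chain of inequalities by translating Theorem \ref{thm:DynamicalDegreesNonincreasing} through the identity $\Theta_k(\H_\phi^{-1})=\Theta_{\abs{\bP}-3-k}(\H_\phi)$ and then pinning down the two endpoints of the chain using general facts about the single-valued map $\H_\phi^{-1}$. All the ingredients are in fact already assembled in the paragraph preceding the statement of the corollary, so the argument is essentially bookkeeping.

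First I would feed the index substitution $j=\abs{\bP}-4-k$ into Theorem \ref{thm:DynamicalDegreesNonincreasing}. This converts the backward-decreasing inequality $\Theta_j(\H_\phi)\ge\PI(\phi)\cdot\Theta_{j+1}(\H_\phi)$ into the forward-increasing inequality
$$\Theta_{k+1}(\H_\phi^{-1})\ge\PI(\phi)\cdot\Theta_k(\H_\phi^{-1}),$$
valid for $k=0,\ldots,\abs{\bP}-4$. Replacing $\PI(\phi)$ by the weaker lower bound $d^{1/\ell_0}$ (justified by the fact that the local degree of $\phi$ at the fully ramified point $p_0$ equals $d$, so the product of local degrees along its cycle of length $\ell_0$ is at least $d$, and hence the geometric mean is at least $d^{1/\ell_0}$) and then multiplying both sides by the fudge factor $(d^{1/\ell_0})^{\abs{\bP}-4-k}$ yields the consecutive link
$$(d^{1/\ell_0})^{\abs{\bP}-3-k}\Theta_k(\H_\phi^{-1})\le (d^{1/\ell_0})^{\abs{\bP}-4-k}\Theta_{k+1}(\H_\phi^{-1}),$$
which is exactly the inequality between consecutive entries of the displayed chain.

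The two endpoint evaluations are then standard: $\Theta_0(\H_\phi^{-1})=1$, because $\H_\phi^{-1}$ is single-valued and meromorphic so it pulls back the fundamental class of top codimension with multiplicity one; and $\Theta_{\abs{\bP}-3}(\H_\phi^{-1})=\deg(\H_\phi^{-1})=d^{\abs{\bP}-3}$, the topological degree, as recorded in the preceding discussion and attributed to Koch \cite{Koch2013}. Assembling these two values at the ends of the chain of consecutive inequalities produced in the previous step gives the corollary.

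There is no genuine obstacle here; the statement is a purely formal consequence of Theorem \ref{thm:DynamicalDegreesNonincreasing} combined with (i) the translation $\Theta_k(\H_\phi^{-1})=\Theta_{\abs{\bP}-3-k}(\H_\phi)$, (ii) the bound $\PI(\phi)\ge d^{1/\ell_0}$ coming from the fully ramified periodic point $p_0$, and (iii) the topological-degree computation of Koch. Any subtlety lies not in the deduction but in the inputs, namely in the proof of Theorem \ref{thm:DynamicalDegreesNonincreasing} itself and in Koch's identification of the topological degree of $\H_\phi^{-1}$.
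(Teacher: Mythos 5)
Your proposal is correct and follows essentially the same route the paper takes: the corollary is derived in the paragraph immediately preceding it from the identity $\Theta_k(\H_{\phi}^{-1})=\Theta_{\abs{\bP}-3-k}(\H_{\phi})$, the bound $\PI(\phi)\ge d^{1/\ell_0}$ coming from the fully ramified periodic point, Theorem \ref{thm:DynamicalDegreesNonincreasing}, and Koch's computation of the endpoints. Your index bookkeeping and the justification of the two endpoint values match the paper's intended argument.
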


In particular, the topological degree of $\H_{\phi}^{-1}$ is strictly larger than its other dynamical degrees. A direct application of Guedj's results in \cite{Guedj2005} yields:

\begin{cor}\label{cor:Guedj}
There is a unique $\H_{\phi}^{-1}$-invariant measure $\mathfrak{m}_{\phi}$ on $\Pn$ of maximal entropy. The measure $\mathfrak{m}_{\phi}$ is mixing, and all its Lyapunov exponents are bounded from below by
$$\frac{1}{2}\log(\PI(\phi))\ge\frac{1}{2\ell_0}\log(d)>0.$$
Further, for any $n>0$, the set of repelling periodic points of $\H_{\phi}^{-1}$ of order $n$ is equidistributed with respect to $\mathfrak{m}_{\phi}$.
\end{cor}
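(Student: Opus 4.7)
The plan is to directly apply the main theorem of Guedj \cite{Guedj2005} to $\H_{\phi}^{-1}$, using the dynamical degree estimates that Corollary \ref{cor:SingleValued} packages from Theorem \ref{thm:DynamicalDegreesNonincreasing}.

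First I would set up the framework. Under hypotheses (\ref{item:KC1}) and (\ref{item:KC2}), by Koch \cite{Koch2013} the map $\H_{\phi}^{-1}$ is a single-valued meromorphic self-map of $\Pn$ with topological degree $d^{\abs{\bP}-3}$. Guedj's main theorem asserts that for a meromorphic self-map $g$ of a smooth projective variety of dimension $N$ whose top dynamical degree (equivalently, topological degree) \emph{strictly} dominates all lower dynamical degrees --- i.e.\ $\Theta_N(g)>\Theta_{N-1}(g)$ --- there exists a unique invariant probability measure of maximal entropy, this measure is mixing and equidistributes repelling periodic orbits of each period, and all of its Lyapunov exponents are bounded below by $\tfrac{1}{2}\log\bigl(\Theta_N(g)/\Theta_{N-1}(g)\bigr)$.

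Second, I would verify the strict domination hypothesis for $g=\H_{\phi}^{-1}$. Using the identity $\Theta_k(\H_{\phi}^{-1})=\Theta_{\abs{\bP}-3-k}(\H_{\phi})$ together with Theorem \ref{thm:DynamicalDegreesNonincreasing} applied to $\H_{\phi}$ at $k=0$, one obtains
$$\Theta_N(\H_{\phi}^{-1})\ge\PI(\phi)\cdot\Theta_{N-1}(\H_{\phi}^{-1}),\qquad N:=\abs{\bP}-3.$$
The inequality $\PI(\phi)\ge d^{1/\ell_0}$ recorded just before Corollary \ref{cor:SingleValued}, combined with $d>1$, forces $\PI(\phi)>1$, which is precisely the required strict domination. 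Plugging this into Guedj's conclusion gives the existence, uniqueness, mixing, and equidistribution statements directly, while the Lyapunov exponent bound specializes to
$$\frac{1}{2}\log\!\left(\frac{\Theta_N(\H_{\phi}^{-1})}{\Theta_{N-1}(\H_{\phi}^{-1})}\right)\ge\frac{1}{2}\log(\PI(\phi))\ge\frac{1}{2\ell_0}\log(d)>0,$$
which is exactly the bound claimed.

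The only potential obstacle is bookkeeping: confirming that $\H_{\phi}^{-1}$ satisfies the technical regularity hypothesis (such as algebraic stability, or adequate control of the indeterminacy locus) built into the precise version of Guedj's theorem being invoked. Since dynamical degrees are birational invariants of the pair $(X,g)$, any such condition can be arranged by passing to a suitable birational model of $\Pn$ without altering the dynamical degree estimates above. I expect this step to be routine rather than substantive, so that all the conceptual work is really carried by Theorem \ref{thm:DynamicalDegreesNonincreasing}.
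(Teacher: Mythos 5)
Your proposal matches the paper's argument: the paper derives Corollary \ref{cor:Guedj} as a direct application of Guedj's theorem on rational maps of large topological degree, with the strict domination $\Theta_{\abs{\bP}-3}(\H_{\phi}^{-1})\ge\PI(\phi)\cdot\Theta_{\abs{\bP}-4}(\H_{\phi}^{-1})$ and $\PI(\phi)\ge d^{1/\ell_0}>1$ supplied by Corollary \ref{cor:SingleValued}, exactly as you describe. The Lyapunov bound $\tfrac{1}{2}\log\bigl(\Theta_N/\Theta_{N-1}\bigr)\ge\tfrac{1}{2}\log(\PI(\phi))$ is likewise the intended specialization, so your proof is correct and essentially identical to the paper's.
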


If $\ell_0=1$, then $p_0$ is fixed and $\phi$ is a topological polynomial. In this case, Corollary \ref{cor:SingleValued} recovers that $\Theta_k(\H_{\phi}^{-1})=d^k$, since by \cite{Koch2013}, $\H_{\phi}^{-1}$ is holomorphic on $\Pn$. Thus in this case $k\mapsto \log(\Theta_k(\H_{\phi}^{-1}))$ is linear of slope $\log(d)$. If $\ell_0>1$, then $k\mapsto \log(\Theta_k(\H_{\phi}^{-1}))$ is a concave function, which by Corollary \ref{cor:SingleValued} is strictly increasing with slope at least $(1/\ell_0)\log(d)$. This generalizes the result that polynomiality of $\phi$ ensures holomorphicity of $\H_{\phi}^{-1}$ on $\Pn$ as follows:

\begin{observation}
The more $\phi$ resembles a topological polynomial, i.e. the smaller the value of $\ell_0$, the more the sequence of dynamical degrees of $\H_{\phi}^{-1}$ resembles the sequence of dynamical degrees of a holomorphic map on $\Pn$ (Figure \ref{fig:Graphs}).
\end{observation}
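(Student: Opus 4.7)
The plan is to convert the informal observation into a precise estimate bounding how far the sequence $k\mapsto\log\Theta_k(\H_\phi^{-1})$ can deviate from the linear function $k\mapsto k\log(d)$ that would arise from a holomorphic self-map of $\Pn$ of polynomial degree $d$. Set $N:=\abs{\bP}-3$ and $s_k:=\log\Theta_{k+1}(\H_\phi^{-1})-\log\Theta_k(\H_\phi^{-1})$ for $k=0,\dots,N-1$. When $\phi$ is a topological polynomial, i.e.\ $\ell_0=1$, one has $s_k=\log(d)$ identically. I would therefore formalize ``resembles'' as an explicit bound on $\max_k\abs{\log\Theta_k(\H_\phi^{-1})-k\log(d)}$ that vanishes at $\ell_0=1$ and grows in $\ell_0$.

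First I would collect three constraints on $(s_k)$. Concavity of $k\mapsto\log\Theta_k$, standard for meromorphic self-maps and recalled above, gives $s_0\ge s_1\ge\cdots\ge s_{N-1}$. Because $\H_\phi^{-1}$ is single-valued, $\Theta_0(\H_\phi^{-1})=1$, and because by \cite{Koch2013} its topological degree equals $d^N$, the sum $\sum_k s_k$ equals $N\log(d)$. Finally, Corollary~\ref{cor:SingleValued} yields the uniform lower bound $s_k\ge\log(d)/\ell_0$.

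Define $T_k:=\log\Theta_k(\H_\phi^{-1})-k\log(d)=\sum_{j=0}^{k-1}(s_j-\log(d))$; then $T_0=T_N=0$ and the discrete differences $T_{k+1}-T_k=s_k-\log(d)$ are non-increasing, so $T$ is a discrete concave function on $\{0,\dots,N\}$ and hence nonnegative. Its maximum occurs at the first index $k_*\ge 1$ with $s_{k_*}<\log(d)$. Using $T_{k_*}=-\sum_{j=k_*}^{N-1}(s_j-\log(d))$ together with the tail estimate $\sum_{j=k_*}^{N-1}(s_j-\log(d))\ge-(N-k_*)\log(d)(1-1/\ell_0)$ coming from the Corollary gives
\begin{align*}
0\le T_k\le (N-1)\log(d)(1-1/\ell_0)\qquad\text{for all }k.
\end{align*}
This bound is $0$ when $\ell_0=1$, recovering the exactly linear profile $\Theta_k(\H_\phi^{-1})=d^k$ of the holomorphic polynomial case, and is uniformly small for small $\ell_0$; this is the precise sense in which the sequence of dynamical degrees resembles that of a holomorphic map on $\Pn$.

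The argument is elementary once the right formalization is chosen; the principal obstacle is rhetorical rather than technical --- selecting a formulation of ``resembles'' that is natural, matches the visual comparison in Figure~\ref{fig:Graphs}, and produces the sharpest bound. Alternative formalizations worth comparing include controlling the ratios $\Theta_k(\H_\phi^{-1})/d^k$ in $[1,d^{(N-1)(1-1/\ell_0)}]$, or controlling the sup-norm distance on $[0,N]$ between the piecewise-linear interpolation of $k\mapsto\log\Theta_k(\H_\phi^{-1})$ and the chord joining its endpoints. Each reduces to the same elementary bookkeeping on a concave sequence with prescribed endpoints and a uniform lower bound on its slopes.
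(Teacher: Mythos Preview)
The paper does not prove this statement: it is offered as an informal \emph{observation}, a qualitative gloss on Corollary~\ref{cor:SingleValued} illustrated by Figure~\ref{fig:Graphs}, with no accompanying argument. Your proposal goes further than the paper by selecting a precise formalization --- the sup-norm distance between $k\mapsto\log\Theta_k(\H_\phi^{-1})$ and the linear profile $k\mapsto k\log(d)$ --- and deriving the explicit bound $0\le T_k\le (N-1)\log(d)(1-1/\ell_0)$. The argument is correct: the three constraints you isolate (log-concavity of the sequence, the fixed endpoints $\Theta_0=1$ and $\Theta_N=d^N$, and the slope lower bound $s_k\ge(1/\ell_0)\log(d)$ from Corollary~\ref{cor:SingleValued}) are exactly what is available, and the elementary concave-sequence estimate you carry out is sound. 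The only minor omission is the degenerate case where no index $k_*$ with $s_{k_*}<\log(d)$ exists, but then all $s_k=\log(d)$ and $T\equiv 0$, so the bound holds trivially. In short, your contribution is strictly more than the paper provides here, converting an interpretive remark into a quantitative inequality; the paper's own ``approach'' is simply to point at the figure.
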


\begin{figure}
\begin{center}
 \begin{tikzpicture}[scale=.6]
    \draw[->] (-.2,0) -- (10+2,0);
    \draw[->] (0,-.2) -- (0,6.9897+2);
    \draw [red] (0,0)--(10,6.9897);
    \draw (0,3.4949)--(10,6.9897);
    \draw (0,4.6598)--(10,6.9897);
    \draw (0,5.2423)--(10,6.9897);
    \draw (0,5.5918)--(10,6.9897);
    \draw (10,6.9897) node[right] {\scriptsize $(\abs{\bP}-3, d(\abs{\bP}-3))$};
    \draw (0,0) node[left] {\scriptsize $\ell_0=1$};
    \draw (0,3.4949) node[left] {\scriptsize $\ell_0=2$};
    \draw (0,4.6598) node[left] {\scriptsize$\ell_0=3$};
    \draw (0,5.2423) node[left] {\scriptsize$\ell_0=4$};
    \draw (0,5.6918) node[left] {\scriptsize$\text{ slope}=(1/\ell_0)d\quad\quad\ell_0=5$};
    \draw (1,-.1)--(1,.1);
    \draw (1,-.4) node {\scriptsize1};
    \draw (2,-.1)--(2,.1);
    \draw (2,-.4) node {\scriptsize2};
    \draw (5,-.4) node {$\cdots$};
    \draw (8,-.1)--(8,.1);
    \draw (9,-.1)--(9,.1);
    \draw[thick] (10,-.1)--(10,.3);
    \draw (10,-.4) node {\scriptsize$\abs{\bP}-3$};
    \draw (12,-.4) node {\scriptsize$k$};
    \draw (-1,9.4897) node {\scriptsize$\log(\delta_k(\mathcal{H}_\phi^{-1}))$};
    \draw[dashed,red] (0,0) to[out=65,in=199.265] (10,6.9897);
  \end{tikzpicture}
\caption{Fix $d>1$ and $\abs{\bP}\ge3$. Given a degree $d$ finite branched covering $\phi$ whose post-critical set has size $\abs{\bP}$ such that there exists a fully ramified point in a periodic cycle of length $\ell_0$, the figure shows how the graph of $k\mapsto\log(\Theta_k(\H_{\phi}^{-1}))$ is constrained by $\ell_0$. If $\ell_0=1$, the graph is the line of slope $\log(d)$, pictured in solid red. If $\ell_0>1$, the graph is concave of slope at least $(1/\ell_0)\log(d)$, passing through $(0,0)$ and $(\abs{P}-3, d(\abs{P}-3))$, and between the line of slope $(1/\ell_0)\log(d)$ pictured in solid black and the line of slope $\log(d)$ pictured in solid red. The dashed red curve depicts qualitatively what the graph might look like if $\ell_0=2$.}
\end{center}
\label{fig:Graphs}
\end{figure}
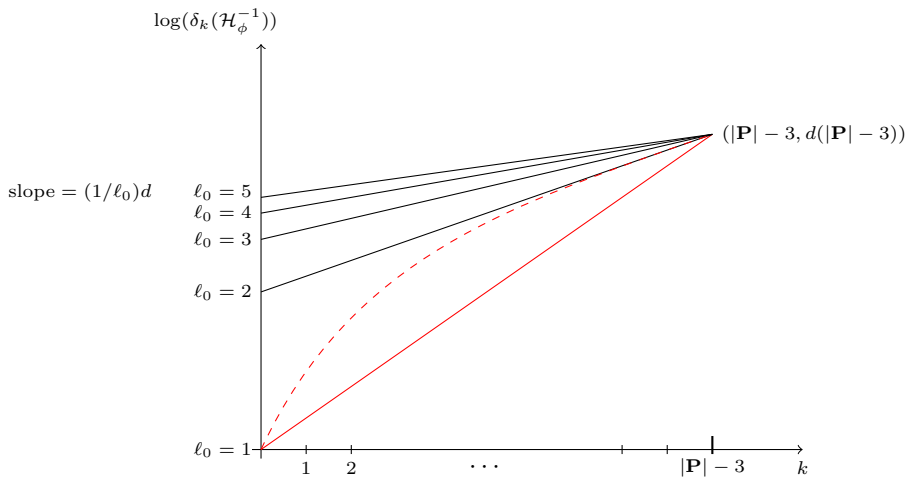
\subsection{Implications for Hurwitz correspondences as multi-valued maps and an application to enumerative geometry}

Hurwitz correspondences can be defined without reference to the Thurston pullback map. Let $\bP$ be a finite set and let $\H$ be a Hurwitz space
parametrizing maps $f:\C\P^1\to \C\P^1$ together with two injections from $\bP$ into the source and target $\C\P^1$ respectively, such that $f$ has specified branching behavior at and over the marked points $\bP$. The Hurwitz space $\H$ admits two maps to $\M_{0,\bP}$: a 
map $\pi_1$ specifying the configuration of marked points on the ``target'' $\C\P^1$, and a map $\pi_2$ specifying the configuration of marked points on the ``source" $\C\P^1$. If the marked points on the target $\C\P^1$ include all
the branch values of $f$, then $\pi_1$ is a covering map, and
$\pi_2\circ\pi_1^{-1}$ defines a multi-valued map from $\M_{0,\bP}$ to itself. 

The Hurwitz space $\H$ may be disconnected; each connected component parametrizes maps of a single topological type. If $\phi$ is a post-critically finite branched covering with post-critical set $\bP$ and branching type as specified by $\H$, then $\H_{\phi}$ is the connected component of $\H$ that parametrizes maps $f:\C\P^1\to\C\P^1$ such that there exist homeomorphisms $\chi_1$ and $\chi_2$ from $(\C\P^1,\bP)$ to $(S^2, \bP)$ with $\chi_2\circ f= \phi\circ\chi_1$. Every connected component of $\H$ arises as $\H_{\phi}$ for some post-critically finite $\phi$. Fixing $\H$, all such branched coverings have the same branching, induce the same map $\bP\to\bP$, and in particular, have the same polynomiality index. Thus polynomiality index is a well-defined invariant of $\H$. We define a Hurwitz correspondence in general to be the multivalued self-map of $\M_{0,\bP}$ obtained by restricting $\pi_2\circ\pi_1^{-1}$ to any non-empty union $\Gamma$ of connected components of a Hurwitz space $\H$. Theorem \ref{thm:DynamicalDegreesNonincreasing} is proved in this more general context, i.e. we have $\Theta_k(\Gamma)\ge \PI(\Gamma)\cdot\Theta_{k+1}(\Gamma)$. This implies that Hurwitz correspondences are a special subclass of multivalued maps: The sequence of dynamical degrees of a multi-valued map may not be log-concave, and appears to be quite unconstrained in general \cite{Truong2016}. 

The topological entropy of a multi-valued map is at most the logarithm of its largest dynamical degree, but can be strictly smaller \cite{DinhSibony2008}. Thus we have:

\begin{cor}
The topological entropy of $\H$ (resp. $\H_{\phi}$ or $\Gamma$) is at most its $0$-th dynamical degree $\Theta_0(\H)$ (resp. $\Theta_0(\H_{\phi})$ or $\Theta_0(\Gamma)$).
\end{cor}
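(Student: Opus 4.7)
The plan is to combine Theorem \ref{thm:DynamicalDegreesNonincreasing} with the Dinh--Sibony upper bound on the topological entropy of a multi-valued meromorphic map cited immediately above the corollary. The content of the corollary is essentially the identification of $\Theta_0$ as the largest dynamical degree, after which the Dinh--Sibony bound gives the result.

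First, I would apply Theorem \ref{thm:DynamicalDegreesNonincreasing} in its general form for a non-empty union of components $\Gamma$ of a Hurwitz space: the inequality
\[
\Theta_k(\Gamma) \ge \PI(\Gamma) \cdot \Theta_{k+1}(\Gamma)
\]
holds for $k = 0, \ldots, \abs{\bP}-4$. Since the polynomiality index satisfies $\PI \ge 1$ in all cases, this forces $\Theta_k(\Gamma) \ge \Theta_{k+1}(\Gamma)$, so the sequence $\Theta_0(\Gamma), \Theta_1(\Gamma), \ldots$ is non-increasing, and therefore $\Theta_0(\Gamma)$ is the largest dynamical degree of $\Gamma$. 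The same conclusion applies verbatim to $\H$ and to $\H_\phi$, since each is itself a non-empty union of components of a Hurwitz space.

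Second, I would invoke the Dinh--Sibony bound \cite{DinhSibony2008} cited just above the corollary: the topological entropy of a multi-valued meromorphic self-map is at most the logarithm of its largest dynamical degree. Combining this with the monotonicity established in the previous step immediately reduces the bound to an inequality in terms of $\Theta_0$, which is the statement of the corollary.

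There is essentially no obstacle. The only point that requires any care is to confirm that Theorem \ref{thm:DynamicalDegreesNonincreasing} really does apply in the generality of $\H$ and $\Gamma$, not just the connected component $\H_\phi$ attached to a single branched covering; but this is precisely what the author asserts in the discussion immediately preceding the corollary, where it is emphasized that the theorem is proved in the more general Hurwitz-space setting and that $\PI$ is a well-defined invariant of any such $\Gamma$.
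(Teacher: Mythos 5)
Your proposal is correct and matches the paper's (implicit) argument exactly: the paper derives this corollary by combining the Dinh--Sibony entropy bound quoted in the sentence immediately preceding it with the monotonicity $\Theta_k \ge \PI\cdot\Theta_{k+1} \ge \Theta_{k+1}$ from Theorem \ref{thm:DynamicalDegreesNonincreasing} in its general form for unions of components. Nothing further is needed.
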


The $0$-th dynamical degree of $\H$ is the topological degree of the ``target" map $\pi_1:\H\to\M_{0,\bP}$. This degree is called a \emph{Hurwitz number}; it counts covers of $\C\P^1$ having specified branch locus on $\C\P^1$ and specified ramification profile. It also counts the number of ways to factor the identity in
the symmetric group $S_d$ as a product of permutations with specified cycle types that
collectively generate a transitive subgroup. Thus the dynamically
motivated quantity $\Theta_0(\H)$ has a purely combinatorial
interpretation.

The top dynamical degree of $\H$ is the topological degree of the ``source" map $\pi_2$. In Section \ref{sec:enumerative}, we use Theorem \ref{thm:DynamicalDegreesNonincreasing} to prove:

\medskip

\noindent\textsc{Proposition \ref{prop:enumerative}.} \textit{Let $r$ be the maximum local degree of $[f:\C\P^1\to\C\P^1]\in\H$ at $p$ where $p$ ranges over $\bP$. Then $\deg(\pi_1)\ge r^{\abs{\bP}-3} \deg(\pi_2)$. That is, the number of ways a generic configuration of $\bP$-marked points on $\C\P^1$ arises the configuration of marked points on the target $\C\P^1$ of some branched map $[f]\in\H$ is at least $r^{\abs{\bP}-3}$ times the number of ways it appears as the configuration of marked points on the source $\C\P^1$ of such a map.}

\subsection{Organization}
Section \ref{sec:Background} gives background on meromorphic multi-valued maps (henceforth referred to as rational correspondences), the moduli space
$\M_{0,\bP}$ and its compactification $\Mbar_{0,\bP}$, Hurwitz spaces,
and Hurwitz correspondences. Section \ref{sec:Proof} contains the
proof of Theorem \ref{thm:DynamicalDegreesNonincreasing}. Section \ref{sec:enumerative} contains an application of the main theorem to enumerative algebraic geometry.

\subsection{Acknowledgements}
  I am grateful to my advisors David Speyer and  Sarah
  Koch for introducing me to Hurwitz correspondences, and for
  wonderful guidance. I am grateful to Mattias Jonsson, Philip Engel,
  Karen Smith, Roland Roeder and Rob Silversmith for useful conversations, and to
  Rob Silversmith for help with typing. I am also grateful to an anonymous referee for useful comments on an earlier version.
  
\subsection{Conventions}
All varieties are over $\C$. For $X$ a variety, we denote
by $\mathcal{Z}_k(X)$ the group of $k$-cycles on $X$; that is, the
free abelian group on the set of $k$-dimensional subvarieties of
$X$. We denote by $A_k(X)$ the Chow group of $k$-cycles on $X$ up to
rational equivalence. For $X$ a smooth variety, we denote by $A^k(X)$
the Chow group of codimension-$k$ cycles on $X$.
\section{Background}\label{sec:Background}
\subsection{Rational
  correspondences/meromorphic multi-valued maps}\label{sec:RationalCorrespondences}
Rational correspondences generalize the notion of a rational map. A
rational correspondence from $X$ to $Y$ is a multi-valued map to $Y$
defined on a dense open set of $X.$
\begin{Def}
  Let $X$ and $Y$ be irreducible smooth projective varieties. A
  \emph{rational correspondence} $(\Gamma,\pi_X,\pi_Y):X\ratt
  Y$ is a diagram
  \begin{center}
    \begin{tikzpicture}
      \matrix(m)[matrix of math nodes,row sep=3em,column
      sep=4em,minimum width=2em] {
        &\Gamma&\\
        X&&Y\\}; \path[-stealth] (m-1-2) edge node [above left]
      {$\pi_X$} (m-2-1); \path[-stealth] (m-1-2) edge node
      [above right]
      {$\pi_Y$} (m-2-3);
    \end{tikzpicture}
  \end{center}
  where $\Gamma$ is a smooth quasiprojective variety, not necessarily
  irreducible, and the restriction of $\pi_X$ to every irreducible
  component of $\Gamma$ is dominant and generically finite.
\end{Def}
Over a dense open set in $X$, $\pi_X$ is a covering map, and
$\pi_Y\circ\pi_X^{-1}$ defines a multi-valued map to $Y.$ However,
considered as a multi-valued map from $X$ to $Y,$ it is possible
that $\pi_Y\circ\pi_X^{-1}$ has indeterminacy, since some fibers of
$\pi_X$ may be empty or positive-dimensional.

Like rational maps, rational correspondences induce pushforward and
pullback maps of Chow groups, and can be composed with each other.
\begin{Def}\label{Def:Cycle}
  Let $\bar{\Gamma}$ be a projective compactification of $\Gamma$ such
  that $\Gamma$ is dense in $\bar{\Gamma}$ and $\pi_X$ and $\pi_Y$
  extend to maps $\bar{\pi_X}$ and $\bar{\pi_Y}$ defined on
  $\bar{\Gamma}$. The cycle
  $(\bar{\pi_X}\times\bar{\pi_Y})_*[\bar{\Gamma}]\in\mathcal{Z}_{\dim
    X}(X\times Y)$ is independent of the choice of
  compactification $\bar{\Gamma}$, so we denote this cycle by
  $[\Gamma]$.
\end{Def}
\begin{rem}
  In \cite{DinhSibony2008}, a rational correspondence from $X$ to
  $Y$ is defined as a cycle $\sum_im_i[\Gamma_i]$ in
  $\mathcal{Z}_{\dim X}(X\times Y),$ such that each $\Gamma_i$
  maps surjectively onto $X.$
\end{rem}
\begin{Def}
  Let $\bar{\Gamma}$ be a projective compactification of $\Gamma$ as
  in Definition \ref{Def:Cycle}. Set
  \begin{align*}
    [\Gamma]_*:&=(\bar{\pi_Y})_*\circ(\bar{\pi_X})^*:A_k(X)\to
    A_k(Y)
  \end{align*}
  and
  \begin{align*}
    [\Gamma]^*:&=(\bar{\pi_X})_*\circ(\bar{\pi_Y})^*:A^k(Y)\to
    A^k(X).
  \end{align*}
\end{Def}
These pushforward and pullback maps are independent of the choice of
compactification $\bar{\Gamma}$; they depend only on the cycle
$[\Gamma]$ (\cite{Fulton1998}, Remark 6.2.2).
\begin{Def}\label{Def:Composite}
  Suppose $(\Gamma,\pi_X,\pi_Y):X\ratt Y$ and
  $(\Gamma',\pi_Y',\pi_Z'):Y\ratt Z$ are rational correspondences such
  that the image under $\pi_Y$ of every irreducible component of
  $\Gamma$ intersects the domain of definition of the multi-valued map
  $\pi_Z'\circ(\pi_Y')^{-1}.$ The \emph{composite}
  $\Gamma'\circ\Gamma$ is a rational correspondence from $X$ to $Z$
  defined as follows.

  Pick dense open sets $U_X\subseteq X$ and $U_Y\subseteq Y$ such
  that $\pi_Y(\pi_X^{-1}(U_X))\subseteq U_Y,$ and
  $\pi_X|_{\pi_X^{-1}(U_X)}$ and $\pi_Y'|_{(\pi_Y')^{-1}(U_Y)}$ are
  both covering maps. Set
  $$\Gamma'\circ
  \Gamma:=\pi_X^{-1}(U_X)\thickspace\thickspace{_{\pi_Y}\times_{\pi_Y'}}\thickspace\thickspace(\pi_Y')^{-1}(U_Y),$$
  together with its given maps to $X$ and $Z$.
\end{Def}
This composite does depend on the choices of open sets $U_X$ and
$U_Y$, but the cycle $[\Gamma'\circ\Gamma]$ is well-defined. Note
that $[\Gamma'\circ\Gamma]_*$ may not agree with
$[\Gamma']_*\circ[\Gamma]_*$ and $[\Gamma'\circ\Gamma]^*$ may not
agree with $[\Gamma]^*\circ[\Gamma']^*$.
\subsection{Dynamical degrees}\label{sec:DynamicalDegrees}
Dynamical degrees were first introduced as invariants of surjective
holomorphic self-maps of a smooth projective variety. The $k$th
dynamical degree of $g:X\to X$ is the spectral radius of
$g^*:H^{k,k}(X)\to H^{k,k}(X).$ Dynamical degrees were later generalized to
rational maps and rational correspondences.
\begin{Def}\label{Def:Dominant}
  Let $(\Gamma,\pi_1,\pi_2):X\ratt X$ be a rational
  correspondence such that the restriction of $\pi_2$ to every
  irreducible component of $\Gamma$ is dominant.  In this case we say
  $\Gamma$ is a \emph{dominant} rational self-correspondence. 
\end{Def}
\begin{Def}
  Let $\Gamma$ be as in Definition \ref{Def:Dominant}. Set
  $\Gamma^n:=\Gamma\circ\cdots\circ\Gamma$ ($n$ times), and pick
  $\mathfrak{h}$ an ample divisor class on $X$. The \emph{$k$th
    dynamical degree} $\Theta_k$ of $\Gamma$ is defined to be
  $$\lim_{n\to\infty}\left(([\Gamma^n]^*(\mathfrak{h}^k))\cdot(\mathfrak{h}^{\dim
      X-k})\right)^{1/n}.$$ This limit exists and is independent of
  choice of ample divisor
  (\cite{DinhSibony2005,DinhSibony2008,Truong2015,Truong2016}).
\end{Def}
The dynamical degrees of $\Gamma$ are determined by the cycle
$[\Gamma]$.
\begin{thm}[(Birational invariance of dynamical
  degrees, \cite{DinhSibony2005,DinhSibony2008,Truong2015,Truong2016})]\label{thm:BirationalInvariance}
  Let $(\Gamma,\pi_1,\pi_2):X\ratt X$ be a dominant rational
  self-correspondence, and let $\beta:X\rat X'$ be a birational
  equivalence. We obtain a dominant rational self-correspondence on
  $X'$ through conjugation by $\beta$ as follows. Let $U$ be the domain
  of definition of $\beta$, and set
  $\Gamma'=\pi_1^{-1}(U)\cap\pi_2^{-1}(U).$ We have a dominant
  rational self-correspondence
\begin{align*}
  (\Gamma',\beta\circ\pi_1,\beta\circ\pi_2):X'\ratt X'.
\end{align*}
Then the dynamical degrees of $\Gamma$ and $\Gamma'$ are equal.
\end{thm}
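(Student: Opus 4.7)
The plan is to reduce the claim to the case of a birational morphism and then to establish a two-sided intersection-number sandwich, uniform in $n$, relating the quantities that compute $\Theta_k(\Gamma)$ and $\Theta_k(\Gamma')$. First I would factor the birational equivalence $\beta:X\rat X'$ through a common smooth projective resolution: by resolution of the graph of $\beta$ there is $Z$ smooth projective with birational morphisms $\sigma:Z\to X$ and $\sigma':Z\to X'$ realizing $\beta=\sigma'\circ\sigma^{-1}$ on a dense open set. Conjugation by $\sigma$ produces a dominant rational self-correspondence $\Gamma_Z$ on $Z$ (well-defined on the preimages of the domain of $\sigma^{-1}$ under $\pi_1$ and $\pi_2$, using the composability from Definition \ref{Def:Composite}), and $\Gamma'$ is the corresponding conjugate under $\sigma'$. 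By the symmetry of this picture and transitivity of the claim, it is enough to prove the theorem when $\beta=\sigma:Z\to X$ is a birational morphism of smooth projective varieties.

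With $\beta=\sigma$, fix ample classes $\mathfrak{h}$ on $X$ and $\mathfrak{h}_Z$ on $Z$. Since $\sigma^*\mathfrak{h}$ is nef and big on $Z$ and $\sigma_*\mathfrak{h}_Z$ is nef and big on $X$, there exist $c_1,c_2>0$ with
\[
\sigma^*\mathfrak{h}\;\le\; c_1\,\mathfrak{h}_Z \quad\text{and}\quad \mathfrak{h}_Z\;\le\; c_2\bigl(\sigma^*\mathfrak{h}+E\bigr),
\]
the second a Siu-type inequality with $E$ an effective $\sigma$-exceptional class. The key target is then an $n$-independent two-sided bound
\[
C_1\cdot\bigl([\Gamma^n]^*\mathfrak{h}^k\bigr)\cdot\mathfrak{h}^{\dim X-k}\;\le\;\bigl([\Gamma_Z^{\,n}]^*\mathfrak{h}_Z^k\bigr)\cdot\mathfrak{h}_Z^{\dim X-k}\;\le\;C_2\cdot\bigl([\Gamma^n]^*\mathfrak{h}^k\bigr)\cdot\mathfrak{h}^{\dim X-k},
\]
from which taking $n$-th roots and sending $n\to\infty$ forces $\Theta_k(\Gamma)=\Theta_k(\Gamma_Z)$.

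To establish the sandwich I would use the projection formula together with the compatibility of the cycle-theoretic pushforward/pullback definitions of $[\Gamma]^*$ and $[\Gamma_Z]^*$: over the locus where $\sigma$ is an isomorphism the cycles $[\Gamma^n]$ and $[\Gamma_Z^{\,n}]$ are identified via $\sigma\times\sigma$, so the difference in intersection numbers can only come from components of $[\Gamma_Z^{\,n}]$ lying over the $\sigma$-exceptional locus. These contributions, by the projection formula and the Negativity Lemma, are controlled by intersecting with products of $\sigma^*\mathfrak{h}$ and $E$ where each factor of $E$ is forced to meet lower-dimensional strata; substituting the two displayed comparisons above and expanding, only terms with no factor of $E$ survive in top degree, and these recover $([\Gamma^n]^*\mathfrak{h}^k)\cdot\mathfrak{h}^{\dim X-k}$ up to the bounded multiplicative constants $C_1,C_2$.

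The main obstacle is that iteration of a rational correspondence is defined only on dense opens (Definition \ref{Def:Composite}), and a priori the indeterminacy of $\beta$ could cause $[\Gamma_Z^{\,n}]$ to pick up extra irreducible components, over the exceptional locus, whose contribution to the $n$-th intersection number grows faster than the main term. Ruling this out is the heart of the Dinh–Sibony/Truong argument: one must show that such exceptional contributions enter only through a constant depending on $\mathfrak{h}$ and $\mathfrak{h}_Z$, not on $n$. This is done by comparing a resolution of $\Gamma^n$ with a resolution of $\Gamma_Z^{\,n}$, exploiting that a single common smooth projective compactification suffices for all iterates once a suitable Siu-type inequality controls the exceptional exchange at each application of $\sigma$.
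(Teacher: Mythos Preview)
The paper does not prove Theorem~\ref{thm:BirationalInvariance}; it is stated as a cited result from \cite{DinhSibony2005,DinhSibony2008,Truong2015,Truong2016} with no accompanying proof in the text. So there is no ``paper's own proof'' to compare against.

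That said, your sketch follows the general architecture of the Dinh--Sibony/Truong arguments: factor through a common smooth resolution, reduce to the case of a birational morphism, and sandwich the intersection numbers defining $\Theta_k$ between constant multiples of each other uniformly in $n$. This is the right shape. A couple of points to tighten. First, the assertion that $\sigma_*\mathfrak{h}_Z$ is nef on $X$ is not automatic for an arbitrary birational morphism $\sigma:Z\to X$ and ample $\mathfrak{h}_Z$; one typically works instead with the comparison $\sigma^*\mathfrak{h}$ versus $\mathfrak{h}_Z$ on $Z$ and uses that $\sigma^*\mathfrak{h}$ is nef and big there, together with a Kodaira-type decomposition $\mathfrak{h}_Z \equiv c\,\sigma^*\mathfrak{h} + (\text{effective})$, rather than pushing $\mathfrak{h}_Z$ down. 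Second, your final paragraph correctly identifies the crux---controlling possible extra components of $[\Gamma_Z^{\,n}]$ supported over the exceptional locus---but the explanation you give (``only terms with no factor of $E$ survive in top degree'') is not literally how the bound works: one needs a submultiplicativity estimate (as in Dinh--Sibony) or a uniform comparison of pullbacks of nef classes through a fixed resolution (as in Truong) to ensure the constants are independent of $n$. As written, your argument asserts this uniformity rather than deriving it.
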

\noindent\textbf{The sequence of dynamical degrees of a rational map
  is log-concave.} Let $g:X\rat X$ be a dominant rational map, and let
$\mathfrak{h}$ be an ample divisor class on $X$. For $n>0$ set
$\Gr(g^n)$ to be the graph of $g^n$ in $X\times X$, with its two maps
$\pi_1^n$ and $\pi_2^n$ to $X$. If $\Theta_k$ denotes the $k$th
dynamical degree of $g$, we
have
\begin{align*}
  \Theta_k&=\lim_{n\to\infty}\left(((g^n)^*(\mathfrak{h}^k))\cdot(\mathfrak{h}^{\dim
      X-k})\right)^{1/n}\\
  &=\lim_{n\to\infty}\left(((\pi_2^n)^*(\mathfrak{h}^k))\cdot((\pi_1^n)^*(\mathfrak{h}^{\dim
      X-k}))\right)^{1/n},
\end{align*}
by the projection formula. Since $(\pi_2^n)^*(\mathfrak{h})$ and
$(\pi_1^n)^*(\mathfrak{h})$ are nef on $\Gr(g^n),$ and $\Gr(g^n)$
is irreducible, the sequence of intersection numbers
$\{((\pi_2^n)^*(\mathfrak{h}^k))\cdot((\pi_1^n)^*(\mathfrak{h}^{\dim X-k}))\}_k$
is log-concave (\cite{Lazarsfeld2004}, Example 1.6.4). Thus the
sequence $\{\Theta_k\}_k$ is log-concave as well.

This statement is false for multi-valued maps/rational
correspondences \cite{Truong2016}. The argument breaks down since their graphs are not
necessarily irreducible. Even if a given rational correspondence $\Gamma$ is irreducible, its iterates $\Gamma^{n}$ are reducible in general. Our proof of
Theorem \ref{thm:DynamicalDegreesNonincreasing} deals separately with
every irreducible component of infinitely many iterates of a given Hurwitz
correspondence.
\subsection{The moduli spaces \texorpdfstring{$\M_{0,\bP}$}{M\_{}\{0,P\}} and \texorpdfstring{$\Mbar_{0,\bP}$}{Mbar\_{}\{0,P\}}}
The moduli space $\M_{0,\bP}$ is a smooth quasiprojective
variety parametrizing ways of marking $\C\P^1$ by elements of a finite
set, up to change of coordinates on $\C\P^1$.
\begin{Def}
  Let $\abs{\bP}\ge3.$ There is a smooth quasiprojective variety
  $\M_{0,\bP}$ of dimension $\abs{\bP}-3$ parametrizing injections $\iota:\bP\into\C\P^1$ up to post-composition by M\"obius transformations of $\C\P^1$ 
\end{Def}
There are several compactifications of $\M_{0,\bP}$ that extend the interpretation as a moduli space. The most widely-studied of these is the Deligne-Mumford/
stable curves compactification $\Mbar_{0,\bP}$. Projective space $\C\P^{\abs{\bP}-3}$ is another such compactification.
\begin{Def}\label{Def:StableCurve}
  A \emph{stable $\bP$-marked genus zero curve} is a connected
  projective curve $C$ of arithmetic genus zero whose only
  singularities are simple nodes, together with an injection
  $\iota:\bP\into(\mbox{smooth locus of $C$})$, such that the set of
  automorphisms $C\to C$ that commute with $\iota$ is finite.
\end{Def}
\begin{thm}[(Deligne, Grothendieck, Knudsen, Mumford)]
  There is a smooth projective variety $\Mbar_{0,\bP}$ of dimension
  $\abs{\bP}-3$ that parametrizes stable $\bP$-marked genus zero
  curves. It contains $\M_{0,\bP}$ as a dense open subset.
\end{thm}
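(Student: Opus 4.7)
The plan is to construct $\Mbar_{0,\bP}$ inductively on $n = |\bP|$, following Knudsen's realization of $\Mbar_{0,\bP}$ as the universal curve over $\Mbar_{0,\bP \setminus \{p_*\}}$ for a chosen marked point $p_* \in \bP$. The key point that makes genus zero especially tractable is that a stable $\bP$-marked genus zero curve with $|\bP| \ge 3$ has no non-trivial automorphisms, so the moduli space is an honest scheme and not merely a stack.

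For the base case $n = 3$, the group $\PGL_2(\C)$ acts simply transitively on ordered triples of distinct points of $\C\P^1$, so $\M_{0,\bP}$ is a single point; the only stable $\bP$-marked genus zero curve is a smooth $\C\P^1$ with three distinct markings, so $\Mbar_{0,\bP}$ is also a point. Smoothness, projectivity, and the dimension count $|\bP|-3 = 0$ are then immediate.

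For the inductive step, fix $\bP$ with $|\bP| = n+1$, choose $p_* \in \bP$, and set $\bP_0 := \bP \setminus \{p_*\}$. By hypothesis, $\Mbar_{0,\bP_0}$ is a smooth projective variety of dimension $n-3$ containing $\M_{0,\bP_0}$ as a dense open subset, and it carries a universal family $\pi \colon \cU \to \Mbar_{0,\bP_0}$ whose fiber over $[C,\iota]$ is $C$ itself, equipped with tautological sections $\sigma_q$ for $q \in \bP_0$. A closed point of $\cU$ corresponds to a pair $([C,\iota], x)$ with $x \in C$. If $x$ is disjoint from all $\sigma_q([C,\iota])$ and from the nodes of $C$, then $(C, \iota \cup \{p_* \mapsto x\})$ is itself a stable $\bP$-marked genus zero curve. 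Otherwise, apply Knudsen's stabilization: blow up $C$ at $x$ and attach a new $\C\P^1$-component that carries $p_*$ together with the colliding marked point, or splits the node so the new component sits between the two adjacent components. Carried out in families, this procedure produces $\Mbar_{0,\bP}$ together with a proper flat morphism $f \colon \Mbar_{0,\bP} \to \Mbar_{0,\bP_0}$ whose geometric fibers are exactly the stable $\bP$-marked genus zero curves.

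The required verifications are: (i) \emph{smoothness} of $\Mbar_{0,\bP}$ via deformation theory --- stable nodal genus zero curves with markings in the smooth locus are unobstructed, and the Zariski tangent space at $[C,\iota]$ has dimension exactly $|\bP|-3$ after accounting for smoothings of each node contributing one parameter and the position of each smooth marking contributing one parameter modulo $\PGL_2$; (ii) \emph{projectivity}, which follows from projectivity of $\Mbar_{0,\bP_0}$ and of $f$; (iii) the \emph{moduli interpretation}, which amounts to showing that forgetting $p_*$ and then stabilizing inverts the construction above. Density of $\M_{0,\bP}$ in $\Mbar_{0,\bP}$ is then immediate, since any node in a family can be smoothed, reducing the number of components until the fiber is a smooth irreducible marked $\C\P^1$. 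The main obstacle is the local analysis underlying Knudsen's stabilization: one must verify that the blow-up construction is compatible with the moduli functor in families and produces a smooth total space with flat projection whose fibers are nodal in a neighborhood of every "collision" locus. Once this local model is pinned down explicitly, \'etale-local triviality of $\cU$ allows the global construction to be assembled by descent.
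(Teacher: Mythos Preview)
The paper does not prove this theorem; it is quoted as background with attribution to Deligne, Grothendieck, Knudsen, and Mumford, and no argument is given. So there is no ``paper's own proof'' to compare against.

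Your outline is a reasonable sketch of Knudsen's inductive construction of $\Mbar_{0,n+1}$ as the universal curve over $\Mbar_{0,n}$, and the genus-zero simplifications you highlight (trivial automorphism groups, explicit base case) are correct. As written it is a plausible roadmap rather than a proof: the substantive work lies exactly where you flag it, in verifying that stabilization in families yields a smooth total space representing the moduli functor, and you defer that to ``Knudsen's stabilization'' without carrying it out. For the purposes of this paper that is entirely appropriate, since the authors themselves simply cite the result.
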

The complement $\Mbar_{0,\bP}\setminus\M_{0,\bP}$ is a simple normal
crossings divisor, referred to as the \emph{boundary} of
$\Mbar_{0,\bP}$. Given a subset $\bS\subseteq\bP$ such that
$\abs{\bS},\abs{\bS^C}\ge2,$ define a divisor
$\delta_\bS\subseteq\Mbar_{0,\bP}$ as follows. Consider the locus of
all $[C,\iota]$ in $\Mbar_{0,\bP}$ such that $C$ has two irreducible
components joined at a node, the points $\iota(p)$ with $p\in\bS$ are
all on one component, and the points $\iota(p)$ with $p\in\bS^C$ are
all on the other component. Let $\delta_\bS$ be the closure of this
locus; $\delta_\bS$ is an irreducible divisor contained in the
boundary. Every irreducible component of the boundary is obtained in
this manner. Note that $\delta_\bS=\delta_{\bS^C}.$
\begin{Def}
  For an injection $j:\bP'\into\bP$ with $\abs{\bP'}\ge3$, there is a
  \emph{forgetful map} $\mu:\M_{0,\bP}\to\M_{0,\bP'}$ sending
  $[C,\iota]$ to $[C,\iota\circ j].$ This map extends to
  $\mu:\Mbar_{0,\bP}\to\Mbar_{0,\bP'}$.
\end{Def}

\noindent\textbf{The tautological $\psi$-classes.}
$\Mbar_{0,\bP}$ has a tautological line bundle $\mathcal{L}_p$
corresponding to each marked point $p\in\bP.$ This line bundle assigns
to the point $[C,\iota]$ the 1-dimensional complex vector space
$T_{\iota(p)}^\vee C,$ namely, the cotangent line to the curve $C$ at
the marked point $\iota(p)$. The divisor class associated to
$\mathcal{L}_p$ is denoted $\psi_p$.

The space $H^0(\Mbar_{0,\bP},\mathcal{L}_p)$ is
$(\abs{\bP}-2)$-dimensional and basepoint-free. The induced map
$\rho:\Mbar_{0,\bP}\to\P(H^0(\Mbar_{0,\bP},\mathcal{L}_p)^\vee)\cong\C\P^{\abs{\bP}-3}$
is a birational map onto $\C\P^{\abs{\bP}-3}$ (\cite{Kapranov1993}).

\medskip

Consider a forgetful map
$\mu:\Mbar_{0,\bP\cup\{q\}}\to\Mbar_{0,\bP}$. For $p\in\bP,$ we have (\cite{ArbarelloCornalba1998})
\begin{align*}
  \mu^*\psi_p^{\Mbar_{0,\bP}}=\psi_p^{\Mbar_{0,\bP\cup\{q\}}}-\delta_{\{p,q\}}.
\end{align*}
It follows by induction that:
\begin{lem}\label{lem:ForgetPsi}
  For a forgetful map $\mu:\Mbar_{0,\bP\cup\bQ}\to\Mbar_{0,\bP}$, we
  have
  \begin{align*}
    \mu^*\psi_p^{\Mbar_{0,\bP}}=\psi_p^{\Mbar_{0,\bP\cup\bQ}}-\sum_{\substack{\bS\subseteq\bQ\\\text{$\bS$
          nonempty}}}\delta_{\{p\}\sqcup \bS}.
  \end{align*}
\end{lem}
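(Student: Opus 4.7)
The plan is to prove the identity by induction on $|\bQ|$, using the factorization of $\mu$ as a composition of one-point forgetful maps. The base case $|\bQ|=1$ is precisely the formula from \cite{ArbarelloCornalba1998} that is recalled just above the lemma statement.

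For the inductive step, pick any $q\in\bQ$, set $\bQ'=\bQ\setminus\{q\}$, and factor $\mu$ as
$$\Mbar_{0,\bP\cup\bQ}\xrightarrow{\mu''}\Mbar_{0,\bP\cup\bQ'}\xrightarrow{\mu'}\Mbar_{0,\bP},$$
where $\mu''$ forgets only $q$ and $\mu'$ is the forgetful map associated to the injection $\bP\hookrightarrow\bP\cup\bQ'$. By the inductive hypothesis applied to $\mu'$, and then applying $(\mu'')^*$, I can use the base case (now applied to $\mu''$) to rewrite $(\mu'')^*\psi_p^{\Mbar_{0,\bP\cup\bQ'}}=\psi_p^{\Mbar_{0,\bP\cup\bQ}}-\delta_{\{p,q\}}$, and use the standard pullback identity for boundary divisors under a one-point forgetful map, namely
$$(\mu'')^*\delta_T^{\Mbar_{0,\bP\cup\bQ'}}=\delta_T^{\Mbar_{0,\bP\cup\bQ}}+\delta_{T\sqcup\{q\}}^{\Mbar_{0,\bP\cup\bQ}},$$
valid for any boundary subset $T\subseteq\bP\cup\bQ'$. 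This pullback identity reflects the geometric fact that a generic nodal curve $C=C_1\cup C_2$ in $\delta_T$ has two-component preimage under $\mu''$, corresponding to the two choices of placing the new marked point $q$ on either $C_1$ or $C_2$.

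Combining these ingredients produces
$$\mu^*\psi_p=\psi_p-\delta_{\{p,q\}}-\sum_{\emptyset\ne\bS\subseteq\bQ'}\left(\delta_{\{p\}\sqcup\bS}+\delta_{\{p\}\sqcup\bS\sqcup\{q\}}\right),$$
and it remains to match this with the desired expression. This is pure bookkeeping: the claimed sum $\sum_{\emptyset\ne\bT\subseteq\bQ}\delta_{\{p\}\sqcup\bT}$ splits according to whether $q\in\bT$, giving the nonempty subsets of $\bQ'$ on one hand, and subsets of the form $\bT'\sqcup\{q\}$ for arbitrary $\bT'\subseteq\bQ'$ on the other; the term $\bT'=\emptyset$ accounts for the isolated $\delta_{\{p,q\}}$. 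The main non-combinatorial ingredient is the boundary pullback identity for $\mu''$, which I expect to be the chief step to cite or verify carefully; it is a classical fact about $\Mbar_{0,n}$ obtained from a local analysis near a generic point of $\delta_T$.
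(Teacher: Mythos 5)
Your proof is correct and follows the same route the paper intends: the paper derives Lemma \ref{lem:ForgetPsi} by induction from the one-point formula $\mu^*\psi_p^{\Mbar_{0,\bP}}=\psi_p^{\Mbar_{0,\bP\cup\{q\}}}-\delta_{\{p,q\}}$ of \cite{ArbarelloCornalba1998}, which is exactly your base case and inductive step. The only ingredient you add explicitly --- the classical pullback identity $(\mu'')^*\delta_T=\delta_T+\delta_{T\sqcup\{q\}}$ for a one-point forgetful map --- is the standard fact implicitly used in the paper's ``it follows by induction,'' and your bookkeeping splitting the subsets of $\bQ$ according to whether they contain $q$ checks out.
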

\subsection{Hurwitz spaces and Hurwitz correspondences}\label{sec:HurwitzSpaces}
Hurwitz spaces are moduli spaces parametrizing finite maps with
prescribed ramification between smooth algebraic curves/Riemann surfaces. See
\cite{RomagnyWewers2006} for a summary.
\begin{Def}
  A \emph{partition} $\lambda$ of a positive integer $k$ is a
  multiset of positive integers whose sum with multiplicity is $k$.
\end{Def}
\begin{Def}
  A multiset $\lambda_1$ is a \emph{submultiset} of $\lambda_2$ if
  for all $r\in\lambda_1,$ the multiplicity of occurrence of $r$ in
  $\lambda_1$ is less than or equal to the multiplicity of occurrence
  of $r$ in $\lambda_2$.
\end{Def}
\begin{Def}[(\emph{Hurwitz space}, \cite{Ramadas2015}, Definition 5.4)]\label{Def:HurwitzSpace}
  Fix discrete data:
  \begin{itemize}
  \item $\bA$ and $\bB$ finite sets with cardinality at least 3
    (marked points on source and target curves, respectively),
  \item $d$ a positive integer (degree),
  \item $F:\bA\to \bB$ a map,
  \item $\br:\bB\to\{\mbox{partitions of $d$}\}$ (branching), and
  \item $\rm:\bA\to\Z^{>0}$ (ramification),
  \end{itemize}
  such that
  \begin{itemize}
  \item (Condition 1, Riemann-Hurwitz constraint) $\sum_{b\in\bB}\left(d-\mbox{length of
        $\br(b)$}\right)=2d-2$, and
  \item (Condition 2) for all $b\in\bB,$ the multiset $(\rm(a))_{a\in
      F^{-1}(b)}$ is a submultiset of $\br(b)$.
  \end{itemize}  
  There exists a smooth quasiprojective variety
  $\cH=\cH(\bA,\bB,d,F,\br,\rm),$ a \emph{Hurwitz space},
  parametrizing morphisms $f:\C\P^1\to \C\P^1$ up to isomorphism, where
  \begin{itemize}
  \item There are injections from $\bA$ and $\bB$ into the source and target $\C\P^1$ respectively,
  \item $f$ is degree $d$,
  \item for all $a\in\bA,$ $f(a)=F(a)$ via the injections of $\bA$ and $\bB$ into $\C\P^1$, 
  \item for all $b\in\bB,$ the branching of $f$ over $b$ is given by
    the partition $\br(b)$, and
  \item for all $a\in\bA,$ the local degree of $f$ at $a$ is equal to
    $\rm(a)$.  
  \end{itemize}
\end{Def}
The Hurwitz space $\cH$ has a ``source'' map $\pi_{\bA}$ to
$\M_{0,\bA}$ sending $[f:(\C\P^1, \bA)\to(\C\P^1,\bB)]$ to $[\C\P^1,\bA]$. There is
similarly a ``target'' map $\pi_{\bB}$ from $\H$ to
$\M_{0,\bB}$. Unless $\cH$ is empty, $\pi_{\bB}$ is a finite covering
map. Thus for smooth compactifications $X_{\bA}$ of $\M_{0,\bA}$ and
$X_{\bB}$ of $\M_{0,\bB}$, $(\cH,\pi_{\bB},\pi_{\bA}):X_{\bB}\ratt
X_{\bA}$ is a rational correspondence. We generalize this notion.
\begin{Def}[(\emph{Hurwitz correspondence}, \cite{Ramadas2015},
  Definition 5.5)]
  Let $\bA'$ be any subset of $\bA$ with cardinality at least 3. There
  is a forgetful morphism $\mu:\M_{0,\bA}\to\M_{0,\bA'}$. Let $\Gamma$
  be a union of connected components of $\cH$. If $X_{\bA'}$ and
  $X_{\bB}$ are smooth projective compactifications of $\M_{0,\bA'}$
  and $\M_{0,\bB}$ respectively,
  then $$\left(\Gamma,\pi_{\bB},\mu\circ\pi_{\bA}\right):X_{\bB}\ratt
  X_{\bA'}$$ is a rational correspondence. We call such a rational
  correspondence a \emph{Hurwitz correspondence}.
\end{Def}

\subsection{Hurwitz self-correspondences and dynamics}

Suppose $\phi:S^2\to S^2$ is a degree $d$ orientation-preserving branched covering with finite post-critical set $\bP$. Define $\br:\bP\to\{\text{partitions of $d$}\}$ sending $p\in\bP$ to the branching profile of $\phi$ over $p$. Define $\rm:\bP\to\Z^{>0}$ sending $p\in\bP$ to the local degree of $\phi$ at $p$. Then
\begin{align*}
\H=\H(\bP,\bP,d,\phi|_{\bP},\br,\rm)
\end{align*}
parametrizes maps $(\C\P^1,\bP)\to(\C\P^1,\bP)$ with the same branching as $\phi$. Let $\pi_1$ and $\pi_2$ be the ``target" and ``source" maps from $\H$ to $\M_{0,\bP}$. For $\Gamma$ a non-empty union of connected
components of $\H$, and $X_{\bP}$ any compactification of $\M_{0,\bP}$, $(\Gamma,\pi_1,\pi_2):X_{\bP}\ratt X_{\bP}$ is a rational self-correspondence.

There is a unique connected component $\H_{\phi}$ of $\H$ parametrizing maps that are topologically isomorphic to $\phi$, i.e. maps $f:(\C\P^1,\bP)\to(\C\P^1,\bP)$ such that there exist marked-point-preserving homeomorphisms $\chi_1$ and $\chi_2$ from $(\C\P^1,\bP)$ to $(S^2, \bP)$ with $\chi_2\circ f= \phi\circ\chi_1$. By \cite{Koch2013}, the multi-valued map defined by $\H_{\phi}$ on $\M_{0,\bP}$ is descended from the Thurston pullback map $\Th_{\phi}$. 

It is convenient to consider Hurwitz self-correspondences in more generality. Given a Hurwitz space $\H=\H(\bP',\bP,d,F,\br,\rm)$ together with an injection $\bP\into\bP',$ if $\mu:\M_{0,\bP'}\to\M_{0,\bP}$ is the forgetful map, $\Gamma$ is a non-empty union of connected
components of $\H$, and $X_{\bP}$ is a compactification of $\M_{0,\bP}$, then $(\Gamma,\pi_{\bP},\mu\circ\pi_{\bP'}):X_{\bP}\ratt X_{\bP}$ is a Hurwitz
self-correspondence. Note that by Theorem \ref{thm:BirationalInvariance}, the
dynamical degrees of the Hurwitz self-correspondence $\Gamma$ do not
depend on the choice of compactification $X_{\bP}$.

\begin{Def}\label{def:PI}
As above, let $\H=\H(\bP',\bP,d,F,\br,\rm)$ be a Hurwitz space together with an injection $\bP\into\bP'$. Since $F:\bP\to\bP$ is a self-map of a finite set, every point eventually maps into a periodic cycle. We define the \emph{polynomiality index} of $\H$ to be 
$$\PI(\H):=\max_{\{p\in\bP, \ell>0 | F^{\ell}(p)=p\}}\left(\Pi_{i=0}^{\ell-1}\rm(p)\right)^{1/\ell}.$$ 
If $\Gamma$ is a non-empty union of connected components of $\H$, then we define the polynomiality index of $\Gamma$ to be the polynomiality index of $\H$. 
\end{Def}

Note that the polynomiality index of $\H_{\phi}$ as in Definition \ref{def:PI} agrees with the polynomiality index of $\phi$ as in Section \ref{sec:Intro}.
\subsection{Fully marked Hurwitz spaces and admissible covers}
Harris and Mumford (\cite{HarrisMumford1982}) constructed
compactifications of Hurwitz spaces. These compactifications are
called moduli spaces of \emph{admissible covers}. They are
projective varieties that parametrize certain ramified maps between
nodal curves. They extend the ``target curve'' and ``source curve''
maps to the stable curves compactifications of the moduli spaces of
target and source curves, respectively.

In general, the admissible covers compactifications are only coarse
moduli spaces with \emph{orbifold singularities}. For technical ease,
we introduce a class of Hurwitz spaces whose admissible covers
compactifications are fine moduli spaces. We call these Hurwitz spaces
\emph{fully marked}.
\begin{Def}[(\cite{Ramadas2015}, Definition 5.6)]\label{Def:FullyMarked}
  Given $(\bA,\bB,d,F,\br,\rm)$ as in Definition
  \ref{Def:HurwitzSpace} with Condition 2 strengthened to:
  \begin{itemize}
  \item (Condition $2'$) For all $b\in\bB,$ the multiset
    $(\rm(a))_{a\in F^{-1}(b)}$ is \textbf{equal to} $\br(b),$
  \end{itemize}
  we refer to the corresponding Hurwitz space
  $\cH(\bA,\bB,d,F,\br,\rm)$ as a \emph{fully marked Hurwitz
    space}.
\end{Def}
Given any Hurwitz space $\cH=\cH(\bA,\bB,d,F,\br,\rm)$, there exists a
fully marked Hurwitz space $\Hfull=\H(\Afull,\bB,d,F,\br,\rm)$, where
$\Afull$ is a superset of $\bA$ extending the functions $F$ and
$\rm$. There is a finite covering map $\nu:\Hfull\to\cH$, and we have
the following commutative diagram (see \cite{Ramadas2015} for
details):
\begin{center}
  \begin{tikzpicture}
    \matrix(m)[matrix of math nodes,row sep=3em,column
    sep=4em,minimum width=2em] {
      &\Hfull&\\
      &\H&\M_{0,\Afull}\\
      \M_{0,\bB}&&\M_{0,\bA}\\};
    \path[-stealth] (m-1-2) edge node [left] {$\nu$} (m-2-2);
    \path[-stealth] (m-1-2) edge node [above right] {$\pi_{\Afull}$}
    (m-2-3);
    \path[-stealth] (m-2-3) edge node [right] {$\mu$} (m-3-3);
    \path[-stealth] (m-2-2) edge node [above right] {$\pi_{\bA}$} (m-3-3);
    \path[-stealth] (m-2-2) edge node [above left] {$\pi_{\bB}$} (m-3-1);
  \end{tikzpicture}
\end{center}
For $\Gamma$ a union of connected components of $\H,$ and for
$X_{\bB}$ and $X_{\bA}$ smooth projective compactifications of
$\M_{0,\bB}$ and $\M_{0,\bA}$, respectively,
$(\Gamma,\pi_\bB,\pi_\bA):X_\bB\ratt X_\bA$ is a Hurwitz
correspondence. Set $\Gamma^{\full}=\nu^{-1}(\Gamma).$ Then
$\Gamma^{\full}$ is a union of connected components of $\Hfull,$ and
in $\mathcal{Z}_{\dim X_\bB}(X_\bB\times
X_\bA),$ $$[\Gamma]=\frac{1}{\deg\nu}[\Gamma^{\full}].$$
\begin{lem}\label{lem:ReduceToConnectedFullyMarked}
  Let $(\Gamma,\pi_1,\pi_2):X_\bP\ratt X_\bP$ be a dominant Hurwitz
  self-correspondence. Then
  \begin{align*}
    (\mbox{$k$th dynamical degree of $\Gamma$})=\frac{1}{\deg\nu}(\mbox{$k$th dynamical degree of $\Gamma^{\full}$}),
  \end{align*}
  where $\Gamma^{\full}$ is a union of connected components of a fully
  marked Hurwitz space $\Hfull$ corresponding to a superset $\Pfull$
  of $\bP,$ and $\nu:\Gamma^{\full}\to\Gamma$ is a finite covering
  map.
\end{lem}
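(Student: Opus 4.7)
The plan is to reduce the statement to the cycle-level identity
$$[\Gamma^n]=\frac{1}{(\deg\nu)^n}[(\Gamma^{\full})^n]\quad\text{in }\mathcal{Z}_{\dim X_\bP}(X_\bP\times X_\bP),$$
for every $n\ge 1$, after which the dynamical-degree equality drops out of the defining formula. The $n=1$ case is the identity $[\Gamma]=\frac{1}{\deg\nu}[\Gamma^{\full}]$ already recorded in the excerpt, which reflects the fact that both the target and source maps from $\Gamma^{\full}$ to $X_\bP$ factor through the finite covering $\nu$ of degree $\deg\nu$.

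For the inductive step, write $\pi_1,\pi_2$ for the target and source maps on $\Gamma$, and $\pi_i^{\full}:=\pi_i\circ\nu$ on $\Gamma^{\full}$. Definition \ref{Def:Composite} computes both $\Gamma^{n+1}=\Gamma^n\circ\Gamma$ and $(\Gamma^{\full})^{n+1}=(\Gamma^{\full})^n\circ\Gamma^{\full}$ as fiber products over $X_\bP$ of appropriate open subsets; since $(\pi_i^{\full})^{-1}=\nu^{-1}\circ\pi_i^{-1}$, applying $\nu$ on each factor identifies $(\Gamma^{\full})^{n+1}$ with a finite cover of $\Gamma^{n+1}$. Generic-fiber cardinalities multiply under fiber product, so by the inductive hypothesis this cover has degree $(\deg\nu)^n\cdot\deg\nu=(\deg\nu)^{n+1}$. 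Because the "outer" maps $(\Gamma^{\full})^{n+1}\to X_\bP\times X_\bP$ factor through $\Gamma^{n+1}$, compactifying and pushing forward the fundamental class as in Definition \ref{Def:Cycle} yields $[(\Gamma^{\full})^{n+1}]=(\deg\nu)^{n+1}[\Gamma^{n+1}]$, completing the induction.

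With the cycle identity in hand, linearity of pullback in the cycle class gives
$$\bigl([\Gamma^n]^*(\mathfrak{h}^k)\bigr)\cdot\mathfrak{h}^{\dim X_\bP-k}=\frac{1}{(\deg\nu)^n}\bigl([(\Gamma^{\full})^n]^*(\mathfrak{h}^k)\bigr)\cdot\mathfrak{h}^{\dim X_\bP-k}.$$
Taking $n$-th roots the prefactor becomes $1/\deg\nu$; letting $n\to\infty$ and using existence of both limits (Section \ref{sec:DynamicalDegrees}) yields $\Theta_k(\Gamma)=\frac{1}{\deg\nu}\Theta_k(\Gamma^{\full})$.

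The main obstacle is the inductive step: one must carefully verify that the geometric (fiber-product) description of the iterated composite transports cleanly along $\nu$, and that the schematic degree-multiplicativity of finite covers survives the compactification/pushforward construction of Definition \ref{Def:Cycle} so as to give an exact scalar relation on cycle classes. Once this is in place, the rest of the argument is formal bookkeeping with the definition of dynamical degree.
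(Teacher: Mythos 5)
Your proposal is correct and follows essentially the same route as the paper: the paper's entire proof is the one-line assertion of the cycle identity $[\Gamma^n]=(\deg\nu)^{-n}[(\Gamma^{\full})^n]$ for all iterates, from which the dynamical-degree equality follows formally. You simply supply the inductive fiber-product verification of that identity (using that both structure maps of $\Gamma^{\full}$ factor through $\nu$, so the compatibility condition in the fiber product is automatic) and the limit bookkeeping that the paper leaves implicit.
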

\begin{proof}
  For $\Gamma^{\full}$ as above, we have that for every iterate
  $\Gamma^n$,
\begin{equation*}
  [\Gamma^n]=\left(\frac{1}{\deg\nu}\right)^n[(\Gamma^{\full})^n].\qedhere
\end{equation*}
\end{proof}
This means that arbitrary Hurwitz correspondences may be studied via
fully marked Hurwitz spaces.  These in turn have convenient
compactifications by spaces of admissible covers.
\begin{thm}[(Harris and Mumford, \cite{HarrisMumford1982})]
  Given $(\bA,\bB,d,F,\br,\rm)$ satisfying Conditions 1 and $2'$ as in
  Definition \ref{Def:FullyMarked}, there is a projective variety
  $\Hbar=\Hbar(\bA,\bB,d,F,\br,\rm)$ containing
  $\H=\H(\bA,\bB,d,F,\br,\rm)$ as a dense open subset. This
  \emph{admissible covers} compactification $\Hbar$ extends the maps
  $\pi_\bB$ and $\pi_\bA$ to maps $\bar{\pi_{\bB}}$ and
  $\bar{\pi_{\bA}}$ to $\Mbar_{0,\bB}$ and $\Mbar_{0,\bA}$,
  respectively, with $\bar{\pi_\bB}:\Hbar\to\Mbar_{0,\bB}$ a finite
  flat map. $\Hbar$ may not be normal, but its normalization is
  smooth.
\end{thm}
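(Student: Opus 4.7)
The plan is to construct $\Hbar$ as the fine moduli space of \emph{admissible covers} in the sense of Harris-Mumford. An admissible cover over a stable $\bB$-marked genus zero curve $(D, \iota_\bB) \in \Mbar_{0,\bB}$ is a finite morphism $f: C \to D$ where $C$ is a connected nodal curve equipped with an injection $\iota_\bA : \bA \into C^{\mathrm{smooth}}$, the ramification of $f$ over each $\iota_\bB(b)$ matches $\br(b)$ with marked points distributed according to $F$ and $\rm$, the preimage of each node of $D$ consists entirely of nodes of $C$, and at each such source node the local degrees of $f$ on the two branches agree. Condition ($2'$) rigidifies the marking data enough that such covers admit no nontrivial automorphisms, so the moduli functor is representable as a scheme rather than as a stack with orbifold singularities.

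Representability would follow by realizing $\Hbar$ as a locally closed subscheme of a relative Hilbert scheme of the universal nodal curve over $\Mbar_{0,\bB}$, with $\H$ recovered as the open stratum over $\M_{0,\bB}$. For properness I would apply the valuative criterion: given a family of admissible covers over a punctured disk, first extend the target to a family over the whole disk using properness of $\Mbar_{0,\bB}$, and then extend the cover itself by a local analysis at each node of the central target curve, solving for the combinatorial type and local degrees of the source nodes that must appear. Existence and uniqueness of this extension is the main technical step. Finiteness of $\bar{\pi_\bB}$ then follows because over each fixed stable target there are only finitely many admissible covers up to combinatorial type. Flatness is then an application of miracle flatness: $\Mbar_{0,\bB}$ is smooth, $\bar{\pi_\bB}$ is finite and equidimensional, and $\Hbar$ is Cohen-Macaulay by the local complete-intersection description sketched below.

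The main obstacle is analyzing the local structure of $\Hbar$ at boundary points and showing its normalization is smooth. Deformation theory of a single node-over-node pair of local degree $e$ yields a local analytic relation of the form $t = s^e$, where $t$ is the smoothing parameter of the target node and $s$ that of the source node. The Hilbert-scheme-theoretic scheme structure on $\Hbar$ can, at boundary strata where multiple combinatorial configurations are adjacent or where several source nodes lie over the same target node, record only the target parameters while failing to regard the source parameters as regular functions — this is precisely what produces the non-normality. The normalization $\tilde{\Hbar}$ is obtained by explicitly adjoining the source parameters $s$ at each node, and an explicit local computation (equivalently, the smoothness of the stack of twisted stable maps) shows that $\tilde{\Hbar}$ is smooth of the expected dimension $\abs{\bB} - 3$. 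Patching these local descriptions across all combinatorial types of degeneration, indexed by admissible dual graphs of the source curve refining the dual graph of the target, completes the argument. I expect this last local-to-global patching, together with the careful bookkeeping of node-matching data, to be the most delicate part of the proof.
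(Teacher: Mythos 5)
This statement is not proved in the paper: it is quoted as a theorem of Harris and Mumford, with \cite{HarrisMumford1982} (and \cite{Ramadas2015}) as the references, so there is no internal proof to compare your proposal against. Judged on its own terms, your sketch follows the standard construction --- define admissible covers, realize the moduli problem inside a relative Hilbert scheme over $\Mbar_{0,\bB}$, verify properness by the valuative criterion with a local analysis at nodes, get finiteness of $\bar{\pi_\bB}$ from the finitely many combinatorial types over a fixed target, and flatness from ``finite + Cohen--Macaulay source + smooth target.'' The local model $t=s^e$ at a node of local degree $e$, and the identification of the normalization with the (smooth) stack of twisted stable maps, are also the right ingredients. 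But as you acknowledge, the two genuinely hard steps --- existence and uniqueness of the limit admissible cover, and the local-to-global analysis of the boundary --- are deferred rather than carried out, so this is an outline of the known proof rather than a proof.

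One point in your outline is actually misleading and worth correcting. You assert that Condition ($2'$) rigidifies the covers enough that the moduli functor is ``representable as a scheme rather than as a stack with orbifold singularities.'' Marking all ramification points over the points of $\bB$ does kill automorphisms of covers parametrized by the open part $\H$, but it does \emph{not} kill automorphisms of boundary admissible covers: over a node of the target, a source component totally ramified at two unmarked nodes of local degree $e$ carries a $\mu_e$ of ``ghost'' automorphisms, and these points are not marked by $\bA$. These boundary automorphisms are precisely the source of the non-normality that the theorem explicitly allows for ($\Hbar$ ``may not be normal''), and of the need to pass to the normalization (equivalently, to the twisted-stable-maps stack) to recover smoothness. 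So the logic ``Condition $2'$ $\Rightarrow$ no automorphisms $\Rightarrow$ fine moduli scheme'' cannot be run uniformly over $\Hbar$; the correct statement is that the interior is rigid while the boundary structure must be handled by the local $s^e=t$ charts and their quotients, which is where the delicate bookkeeping you mention actually lives.
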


The following comparison of tautological line bundles on moduli spaces of admissible covers is the key ingredient in our proof of Theorem \ref{thm:DynamicalDegreesNonincreasing}:
\begin{prop}[(Ionel, Lemma 1.17 in \cite{Ionel2001})]\label{prop:Ionel}
  Let $\Hbar=\Hbar(\bA,\bB,d,F,\br,\rm)$ be a fully marked space of
  admissible covers with maps $\bar{\pi_\bB}$ and $\bar{\pi_{\bA}}$ to
  $\Mbar_{0,\bB}$ and $\Mbar_{0,\bA}$ respectively. Suppose we have
  $a\in\bA$ and $b\in \bB$ with $F(a)=b.$ Then
  $(\bar{\pi_\bB})^*(\mathcal{L}_b)=(\bar{\pi_{\bA}})^*(\mathcal{L}_a)^{\otimes\rm(a)}$
  as line bundles on $\Hbar$.
\end{prop}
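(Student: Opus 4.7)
The plan is to work with the universal admissible cover over $\Hbar$ and carry out the computation at the level of relative cotangent sheaves.

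First I would set up the universal family. Since $\Hbar$ is a fully marked space of admissible covers, there is a universal admissible cover $\pi:\mathcal{C}_{\bA}\to\mathcal{C}_{\bB}$ over $\Hbar$ (or, if there is a subtlety with $\Hbar$ being non-normal, over its normalization, which is enough since line bundle identities can be checked after a finite cover and then descended). This comes equipped with sections $\sigma_a:\Hbar\to\mathcal{C}_{\bA}$ and $\sigma_b:\Hbar\to\mathcal{C}_{\bB}$ at the marked points $a$ and $b$; because $F(a)=b$ we have $\pi\circ\sigma_a=\sigma_b$. Since the marked points always lie in the smooth locus of the fibers (by the definition of admissible covers), the tautological line bundles are the conormals of the sections:
\begin{align*}
(\bar{\pi_{\bA}})^*\mathcal{L}_a = \sigma_a^*(\mathcal{I}_{\sigma_a}/\mathcal{I}_{\sigma_a}^2), \qquad (\bar{\pi_{\bB}})^*\mathcal{L}_b = \sigma_b^*(\mathcal{I}_{\sigma_b}/\mathcal{I}_{\sigma_b}^2).
\end{align*}

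Next I would analyze $\pi$ locally around $\sigma_a$. Because this is a fully marked Hurwitz space, the ramification index of $\pi$ at $a$ is constantly $\rm(a)=r$ in the family. Smoothness of the versal deformation space of an $r$-fold ramified cover of smooth curves (or, equivalently, the fact that such a cover is étale-locally modeled by $z\mapsto z^r$) lets me choose étale-local coordinates $z$ on $\mathcal{C}_{\bA}$ cutting out $\sigma_a$ and $w$ on $\mathcal{C}_{\bB}$ cutting out $\sigma_b$ so that $\pi^*w=z^r$ identically in a neighborhood of $\sigma_a$. Hence the pulled-back ideal sheaf satisfies
\begin{align*}
\pi^{-1}\mathcal{I}_{\sigma_b}\cdot\mathcal{O}_{\mathcal{C}_{\bA}} \;=\; \mathcal{I}_{\sigma_a}^{\,r}
\end{align*}
in a neighborhood of $\sigma_a$.

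Finally, I would restrict this identity to $\sigma_a$ via $\sigma_a^*$ and use $\sigma_a^*\pi^*=\sigma_b^*$:
\begin{align*}
(\bar{\pi_{\bB}})^*\mathcal{L}_b \;=\; \sigma_b^*(\mathcal{I}_{\sigma_b}/\mathcal{I}_{\sigma_b}^2) \;=\; \sigma_a^*\bigl(\mathcal{I}_{\sigma_a}^{\,r}/\mathcal{I}_{\sigma_a}^{\,r+1}\bigr) \;=\; \bigl(\sigma_a^*(\mathcal{I}_{\sigma_a}/\mathcal{I}_{\sigma_a}^2)\bigr)^{\otimes r} \;=\; (\bar{\pi_{\bA}})^*\mathcal{L}_a^{\otimes r},
\end{align*}
which is the desired identity. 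The only real obstacle is justifying the universal local form $\pi^*w=z^r$ in the family: this requires that at each marked section the ramification index is locally constant and that no extra infinitesimal deformations of the ramification pattern occur. Both facts are standard for \emph{fully marked} admissible covers, precisely because Condition $2'$ from Definition \ref{Def:FullyMarked} rigidifies the preimage of each marked point and eliminates the orbifold issues mentioned just before the definition; this is exactly where the fully marked hypothesis is used.
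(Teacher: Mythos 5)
The paper offers no proof of this proposition: it is imported verbatim as Lemma~1.17 of Ionel's paper, which is exactly why it is stated as a citation. So there is nothing in the text to compare your argument against line by line; what I can say is that your argument is essentially a correct reconstruction of Ionel's own proof. The chain conormal bundle of the section $\to$ local model $w=z^{\rm(a)}$ $\to$ $\pi^{-1}\mathcal{I}_{\sigma_b}\cdot\mathcal{O}=\mathcal{I}_{\sigma_a}^{\rm(a)}$ near $\sigma_a$ $\to$ $\mathcal{I}^{r}/\mathcal{I}^{r+1}\cong(\mathcal{I}/\mathcal{I}^2)^{\otimes r}$ for a regularly embedded section is sound, and you correctly identify that the disjointness of the marked sections (so that the other branches of $\pi^{-1}(\sigma_b)$ stay away from $\sigma_a$) and the constancy of the ramification index along $\sigma_a$ are what make the local form uniform over all of $\Hbar$, boundary included. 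The one claim I would push back on is your parenthetical that ``line bundle identities can be checked after a finite cover and then descended'': pullback along the normalization is not injective on Picard groups in general, so this hedge, as stated, does not justify descending the identity from the normalization to $\Hbar$. Fortunately you do not need it: the whole point of the \emph{fully marked} condition (Condition $2'$) is that $\Hbar$ is a fine moduli space, so the universal admissible cover with its sections $\sigma_a$, $\sigma_b$ lives over $\Hbar$ itself and your computation can be run there directly. I would also state more carefully why $w=(\text{unit})\cdot z^{r}$ can be normalized to $w=z^{r}$ (extract an $r$-th root of the unit étale-locally, which is harmless since only the resulting ideal-sheaf identity, not the coordinate itself, is used); with that, the argument is complete.
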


\section{Main Theorem}\label{sec:Proof}

\begin{thm}\label{thm:DynamicalDegreesNonincreasing}
  Let $(\Gamma,\pi_1,\pi_2):\Mbar_{0,\bP}\ratt\Mbar_{0,\bP}$ be a
  dominant Hurwitz self-correspondence. Let $R$ be the polynomiality index of $\Gamma$, and let $\Theta_k$ be the
  $k$th dynamical degree of $\Gamma.$ Then
  \begin{align*}
    \Theta_0\ge R\Theta_1\ge\cdots\ge R^{\abs{\bP}-3}\Theta_{\abs{\bP}-3}.
  \end{align*}
\end{thm}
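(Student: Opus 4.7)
The plan is to compute dynamical degrees using the nef-and-big class $\psi_{p_0}$ for a suitably chosen marked point $p_0 \in \bP$. Choose a periodic cycle $p_0 \to p_1 \to \cdots \to p_{\ell-1} \to p_0$ in $\bP$ under $F|_\bP$ that attains the maximum in the definition of the polynomiality index, so that $\prod_{i=0}^{\ell-1}\rm(p_i) = R^\ell$. By Lemma \ref{lem:ReduceToConnectedFullyMarked}, the inequality to be proved for $\Gamma$ is equivalent to the same inequality for the fully marked lift $\Gamma^{\full}$, since all dynamical degrees scale by the common factor $(\deg\nu)^{-1}$. So I assume henceforth that $\Gamma$ is a union of connected components of a fully marked Hurwitz space with source marking $\Pfull \supseteq \bP$, and work on the normalization of the admissible covers compactification $\Hbarfull$. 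Since $\psi_{p_0}$ is basepoint-free and defines a birational morphism onto projective space (Kapranov), a standard sandwich argument allows the dynamical degrees of $\Gamma$ to be computed using $\psi_{p_0}$ in place of an ample class.

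The key identity comes from combining Proposition \ref{prop:Ionel}, which gives $\bar\pi_{\Pfull}^*(\psi_p^{\Mbar_{0,\Pfull}}) = \tfrac{1}{\rm(p)}\bar\pi_1^*(\psi_{F(p)}^{\Mbar_{0,\bP}})$ for each $p\in\bP$, with Lemma \ref{lem:ForgetPsi} applied to the forgetful map $\mu:\Mbar_{0,\Pfull}\to\Mbar_{0,\bP}$. Writing $\pi_2 = \mu\circ\bar\pi_{\Pfull}$, I obtain on $\Hbarfull$ the effective class identity
\[
\tfrac{1}{\rm(p)}\,\bar\pi_1^*\psi_{F(p)} \;-\; \bar\pi_2^*\psi_p \;=\; \bar\pi_{\Pfull}^*\Bigl(\sum_{\emptyset\ne\bS\subseteq\Pfull\setminus\bP}\delta_{\{p\}\sqcup\bS}\Bigr),
\]
whose right-hand side is effective. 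Iterating this through the tower of fiber products defining $\Gamma^n$, and applying it $\ell$ successive times around the chosen cycle $p_0\to p_1\to\cdots\to p_0$, I find that for $n = m\ell$ and on each irreducible component of $\Gamma^n$ (or rather its normalization), the class $(\pi_1^n)^*\psi_{p_0} - R^n\,(\pi_2^n)^*\psi_{p_0}$ is effective.

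Set $A := (\pi_2^n)^*\psi_{p_0}$ and $B := (\pi_1^n)^*\psi_{p_0}$ on such a component. Both $A$ and $B$ are nef as pullbacks of a nef class under a proper map, and $B - R^n A$ is effective. Hence
\[
A^k \cdot B^{\dim - k} \;-\; R^n\,A^{k+1}\cdot B^{\dim - k - 1} \;=\; A^k\cdot B^{\dim - k - 1}\cdot (B - R^n A) \;\ge\; 0.
\]
Summing over the irreducible components of $\Gamma^n$ with their multiplicities in the cycle $[\Gamma^n]$ and applying the projection formula yields
\[
([\Gamma^n]^*\psi_{p_0}^k)\cdot\psi_{p_0}^{\dim - k} \;\ge\; R^n\cdot ([\Gamma^n]^*\psi_{p_0}^{k+1})\cdot\psi_{p_0}^{\dim - k - 1}.
\]
Taking $n$-th roots and letting $n = m\ell \to \infty$ along multiples of $\ell$ delivers $\Theta_k \ge R\cdot\Theta_{k+1}$, as required. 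The chief technical obstacles are, first, justifying that the nef-and-big class $\psi_{p_0}$ correctly computes the dynamical degrees in place of an ample class, and second, carefully tracking the Ionel identity component-by-component along the tower of fiber products defining $\Gamma^n$: irreducible components of $\Gamma^n$ proliferate with $n$, and the effective-difference identity must be set up on (a desingularization of) each component so that the nef-times-effective intersection argument is valid.
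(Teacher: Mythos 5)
Your proposal is correct and follows the paper's strategy in all essentials: reduce to a fully marked Hurwitz space via Lemma \ref{lem:ReduceToConnectedFullyMarked}, combine Proposition \ref{prop:Ionel} with Lemma \ref{lem:ForgetPsi} to get the boundary-supported effective difference $\bar{\pi_1}^*\psi_{F(p)}-\rm(p)\,\bar{\pi_2}^*\psi_p$, propagate it inductively through the fiber-product tower along a cycle realizing $R$, replace the ample class by $\psi_{p_0}$ via Kapranov's map and birational invariance, and work irreducible component by irreducible component before summing and taking roots. The one place you genuinely diverge is the last intersection-theoretic step: the paper establishes only the single inequality $\alpha_{\bar{\cJ},0}\ge R^{m\ell_0}\alpha_{\bar{\cJ},1}$ directly (by intersecting with a curve $Y$ pulled back from a general line missing the exceptional locus, so that $E_m\cdot[Y]\ge0$) and then invokes log-concavity of $k\mapsto\alpha_{\bar{\cJ},k}$ for nef classes on an irreducible projective variety to cascade the inequality down the whole chain; you instead prove every step at once by writing $\alpha_k-R^n\alpha_{k+1}$ as the intersection of $\abs{\bP}-4$ nef classes with the effective class $B-R^nA$, which is non-negative by Kleiman. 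Your version is more direct, dispenses with log-concavity (and with the implicit positivity needed to divide through in the log-concavity cascade), and only requires the standard fact that nef classes meet effective cycles non-negatively on a projective scheme; the paper's version isolates log-concavity as the structural reason the single inequality at $k=0$ controls the whole sequence. Both are valid proofs.
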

\begin{proof}
  By Lemma \ref{lem:ReduceToConnectedFullyMarked}, we may assume
  $\Gamma$ is a union of connected components of a fully marked
  Hurwitz space $\H=\H(\Pfull,\bP,d,F,\br,\rm)$ corresponding to a
  superset $\Pfull$ of $\bP$. Let $\Hbar$ denote the admissible covers
  compactification of $\H$, and let $\bar\Gamma$ be the closure of
  $\Gamma$ in $\Hbar.$ For $\ell>0$ set $\Gamma^\ell$ to be the
  $\ell$th iterate of $\Gamma$, that is
  \begin{align*}
    \Gamma\thickspace{_{\pi_2}\times_{\pi_1}}\thickspace\cdots\thickspace{_{\pi_2}\times_{\pi_1}}\thickspace\Gamma\mbox{\quad\quad($\ell$
      times),}
  \end{align*}
  Set $\bar{\Gamma^{\ell}}$ to be its compactification
  \begin{align*}
    \bar\Gamma\thickspace{_{\bar{\pi_2}}\times_{\bar{\pi_1}}}\thickspace\cdots\thickspace{_{\bar{\pi_2}}\times_{\bar{\pi_1}}}\thickspace\bar\Gamma\mbox{\quad\quad($\ell$
      times),}
  \end{align*}
  with $\bar{\pi_1^{\ell}}$ and $\bar{\pi_2^{\ell}}$ its two maps to
  $\Mbar_{0,\bP}$.

  Since $\bar{\pi_1^{\ell}}$ is a flat map, no irreducible component
  of $\bar{\Gamma^\ell}$ is supported over the boundary of
  $\Mbar_{0,\bP}$. This means that $\Gamma^\ell$ is a dense open
  subset of $\bar{\Gamma^\ell}$. We refer to the complement
  $\bar{\Gamma^\ell}\setminus\Gamma^{\ell}$ as the boundary of
  $\bar{\Gamma^\ell}$. The inverse image under $\bar{\pi_1^\ell}$ of
  the boundary of $\Mbar_{0,\bP}$ is exactly the boundary of
  $\bar{\Gamma^\ell}$. The inverse image under $\bar{\pi_2^\ell}$ of
  the boundary of $\Mbar_{0,\bP}$ is contained in the boundary of
  $\bar{\Gamma^\ell}.$

  The compactification $\bar{\Gamma^\ell}$ is singular. However, for
  Cartier divisors $D_1,\ldots,D_{\dim\bar{\Gamma^\ell}},$ the
  intersection product $D_1\cdot\cdots\cdot D_{\dim\bar{\Gamma^\ell}}$
  is a well-defined integer as in Section 1.1.C of
  \cite{Lazarsfeld2004}. For any subscheme $Y$ of dimension $k$, and
  Cartier divisors $D_1,\ldots,D_k,$ we similarly have the
  intersection number $D_1\cdot\cdots\cdot D_k\cdot Y\in\Z.$
  \begin{lem}\label{lem:Cartier}
    For all $p\in\bP$ and for all $\ell\ge0,$ there is an equality of
    Cartier divisors on $\bar{\Gamma^\ell}$ of the form
    \begin{align*}
      (\bar{\pi_1^\ell})^*(\psi_{F^\ell(p)})=\Pi_{i=0}^{\ell-1}\rm(F^i(p))\cdot(\bar{\pi_2^\ell})^*(\psi_{p})+E,
    \end{align*}
    where $E$ is an effective Cartier
    divisor supported on the boundary of $\bar{\Gamma^\ell}.$
  \end{lem}
  \begin{proof}
    We induct on $\ell.$ By convention, $\bar{\Gamma^0}$ is the
    identity rational correspondence
    $$(\Mbar_{0,\bP},\thickspace\bar{\pi_1^0}=\Id,\thickspace\bar{\pi_2^0}=\Id):\Mbar_{0,\bP}\ratt\Mbar_{0,\bP}.$$
    For all $p\in\bP$, $F^0(p)=p$, so
    $(\bar{\pi_1^0})^*(\psi_{F^0(p)})=(\bar{\pi_2^0})^*(\psi_p).$ This
    gives us the base case $\ell=0.$

    Suppose the Lemma holds for $\ell-1.$ We have    
    \begin{center}
      \begin{tikzpicture}
        \matrix(m)[matrix of math nodes,row sep=2.4em,column
        sep=3em,minimum width=2em] {
          &&\bar{\Gamma^\ell}=\bar{\Gamma}\thickspace{_{\bar{\pi_2}}\times_{\bar{\pi_1^{\ell-1}}}}\thickspace\bar{\Gamma^{\ell-1}}&&\\
          &\bar{\Gamma}&&\bar{\Gamma^{\ell-1}}&\\
          &&\Mbar_{0,\Pfull}&&\\
          \Mbar_{0,\bP}&&\Mbar_{0,\bP}&&\Mbar_{0,\bP}\\
        };
        \path[-stealth] (m-1-3) edge node [above left] {$\pr_1$} (m-2-2);
        \path[-stealth] (m-1-3) edge node [above right] {$\pr_2$} (m-2-4);
        \path[-stealth] (m-2-2) edge node [above left] {$\bar{\pi_1}$} (m-4-1);
        \path[-stealth] (m-2-2) edge node [below left] {$\bar{\pi_2}$} (m-4-3);
        \path[-stealth] (m-2-4) edge node [below right] {$\bar{\pi_1^{\ell-1}}$} (m-4-3);
        \path[-stealth] (m-2-4) edge node [above right] {$\bar{\pi_2^{\ell-1}}$}
        (m-4-5);
        \path[-stealth] (m-2-2) edge node [above right]
        {$\bar{\pi_2^{\full}}$} (m-3-3);
\path[-stealth] (m-3-3) edge node [left] {$\mu$} (m-4-3);
      \end{tikzpicture}
    \end{center}
    For all $p\in\bP,$ we have
    \begin{align*}
      (\bar{\pi_1^\ell})^*(\psi_{F^{\ell}(p)})&=\pr_1^*(\bar{\pi_1})^*(\psi_{F^{\ell}(p)})\\
      &=\pr_1^*\left(\rm({F^{\ell-1}(p)})\cdot(\bar{\pi_2^{\mathrm{full}}})^*(\psi_{{F^{\ell-1}(p)}}^{\Pfull})\right)\quad\quad\quad\mbox{(by
        Proposition \ref{prop:Ionel})}.
    \end{align*}
    By Lemma \ref{lem:ForgetPsi},
    \begin{align*}
      \psi_{F^{\ell-1}(p)}^{\Pfull}&=\mu^*(\psi_{F^{\ell-1}(p)})+\sum_{\bS\subseteq\Pfull\setminus\bP}\delta_{\{F^{\ell-1}(p)\}\cup\bS}.
    \end{align*}                                            
    The inverse image under $\bar{\pi_2^{\mathrm{full}}}$ of the
    boundary in $\Mbar_{0,\Pfull}$ is contained in the boundary of
    $\bar{\Gamma}$ (in fact it is the entire boundary), and the
    inverse image under $\pr_1$ of the boundary of $\bar{\Gamma}$ is
    the boundary of $\bar{\Gamma^\ell}.$ Thus, the Cartier divisor
    $$E_1:=\pr_1^*\left((\bar{\pi_2^{\mathrm{full}}})^*\left(\sum_{\mathbf{S}\subseteq\Pfull\setminus\bP}\delta_{\{{F^{\ell-1}(p)}\}\cup
          \mathbf{S}}\right)\right)$$
    is effective and supported on the boundary of $\bar{\Gamma^\ell}.$ We continue:
    \begin{align*}
      (\bar{\pi_1^{\ell}})^*(\psi_{F^{\ell}(p)})&=\rm(F^{\ell-1}(p))\pr_1^*(\bar{\pi_2^{\full}})^*\mu^*(\psi_{F^{\ell-1}(p)})+\rm(F^{\ell-1}(p))E_1\\
      &=\rm(F^{\ell-1}(p))\pr_1^*(\bar{\pi_2})^*(\psi_{F^{\ell-1}(p)})+\rm(F^{\ell-1}(p))E_1\\
      &=\rm(F^{\ell-1}(p))\pr_2^*(\bar{\pi_1^{\ell-1}})^*(\psi_{F^{\ell-1}(p)})+\rm(F^{\ell-1}(p))E_1.
    \end{align*}
    By the inductive hypothesis, we can rewrite this as
    \begin{align*}
      \rm(F^{\ell-1}(p))\pr_2^*(\Pi_{i=0}^{\ell-2}\rm(F^i(p))(\bar{\pi_2^{\ell-1}})^*(\psi_{p})+E_2)+\rm(F^{\ell-1}(p))E_1,
    \end{align*}
    where  $E_2$ is an effective
    Cartier divisor supported on the boundary of
    $\bar{\Gamma^{\ell-1}}.$ Since the inverse image under
    $\pr_2$ of the boundary of $\bar{\Gamma^{\ell-1}}$ is contained in
    the boundary of $\bar{\Gamma^\ell}$, $\pr_2^*(E_2)$ is an
    effective Cartier divisor supported on the boundary of
    $\bar{\Gamma^\ell}.$ Thus we can finally write
    \begin{align*}
     & (\bar{\pi_1^\ell})^*(\psi_{F^{\ell}(p)})\\
     &=\rm(F^{\ell-1}(p))(\Pi_{i=0}^{\ell-2}\rm(F^i(p)))\pr_2^*(\bar{\pi_2^{\ell-1}})^*(\psi_{p})+\rm(F^{\ell-1}(p))\pr_2^*(E_2)+\rm(F^{\ell-1}(p))E_1\\
      &=\Pi_{i=0}^{\ell-1}\rm(F^i(p))(\bar{\pi_2^{\ell}})^*(\psi_{p})+(\rm(F^{\ell-1}(p))\pr_2^*(E_2)+\rm(F^{\ell-1}(p))E_1),
    \end{align*}
    which is as desired. This proves Lemma \ref{lem:Cartier}.
  \end{proof}  
  Now, since $F:\bP\to\bP$ is a map of finite sets, every point is
  eventually periodic. Fix $p\in\bP$ that is periodic of period $\ell_{0}>0$ and such that $(\Pi_{i=0}^{\ell_{0}-1}\rm(F^i(p)))^{\frac{1}{\ell_{0}}}=R.$ Then by Lemma \ref{lem:Cartier}, for every multiple $m\ell_{0},$ we have on
  $\bar{\Gamma^{m\ell_{0}}}$:
  \begin{align}\label{eq:ComparePsi}
    (\bar{\pi_1^{m\ell_{0}}})^*(\psi_{p})=R^{m\ell_{0}}(\bar{\pi_2^{m\ell_{0}}})^*(\psi_p)+E_m,
  \end{align}
  where $E_m$ is an effective Cartier
  divisor supported on the boundary of $\bar{\Gamma^{m\ell_{0}}}.$
  
  Let $\rho:\Mbar_{0,\bP}\to\C\P^{\abs{\bP}-3}$ be the
  birational morphism to projective space given by the line bundle
  $\mathcal{L}_p.$ Let $\mathfrak{h}$ be the Cartier divisor class of
  a hyperplane in $\C\P^{\abs{\bP}-3}.$ Then $\rho^*(\mathfrak{h})=\psi_p.$

  The pullback $[\Gamma^n]^*(\mathfrak{h}^k)$ is by
  definition $$(\rho\circ\bar{\pi_1^n})_*\circ(\rho\circ\bar{\pi_2^n})^*(\mathfrak{h}^k).$$
  So, by the projection formula,
    \begin{align*}
      ([\Gamma^n]^*(\mathfrak{h}^k))\cdot(\mathfrak{h}^{\abs{\bP}-3-k})&=((\rho\circ\bar{\pi_2^n})^*(\mathfrak{h}^k))\cdot((\rho\circ\bar{\pi_1^n})^*(\mathfrak{h}^{\abs{\bP}-3-k})).
  \end{align*}  
  Since dynamical degrees are birational invariants, $\Theta_k$ is
  also the $k$th dynamical degree of the induced rational
  correspondence
  $(\Gamma,\rho\circ\bar{\pi_1},\rho\circ\bar{\pi_2}):\C\P^{\abs{\bP}-3}\ratt\C\P^{\abs{\bP}-3}.$
  We have
  \begin{align*}
    \Theta_k&=\lim_{n\to\infty}(([\Gamma^n]^*(\mathfrak{h}^k))\cdot(\mathfrak{h}^{\abs{\bP}-3-k}))^{1/n}\\
    &=\lim_{n\to\infty}(((\rho\circ\bar{\pi_2^n})^*(\mathfrak{h}^k))\cdot((\rho\circ\bar{\pi_1^n})^*(\mathfrak{h}^{\abs{\bP}-3-k})))^{1/n}\\
    &=\lim_{n\to\infty}(((\bar{\pi_2^n})^*(\psi_p^k))\cdot((\bar{\pi_1^n})^*(\psi_p^{\abs{\bP}-3-k})))^{1/n}.
  \end{align*}
  Since this sequence converges, we can find its limit using any
  subsequence,
  and \begin{align*}
  \Theta_k&=\lim_{m\to\infty}(((\bar{\pi_2^{m\ell_{0}}})^*(\psi_p^k))\cdot((\bar{\pi_1^{m\ell_{0}}})^*(\psi_p^{\abs{\bP}-3-k})))^{1/m\ell_{0}}\\
  &=\lim_{m\to\infty}(((\bar{\pi_2^{m\ell_{0}}})^*(\psi_p))^k\cdot((\bar{\pi_1^{m\ell_{0}}})^*(\psi_p))^{\abs{\bP}-3-k})^{1/m\ell_{0}}.
  \end{align*}  
  For $m>0$, set 
  \begin{align*}
      \alpha_{m,k}:=((\bar{\pi_2^{m\ell_{0}}})^*(\psi_p))^k\cdot((\bar{\pi_1^{m\ell_{0}}})^*(\psi_p))^{\abs{\bP}-3-k},
    \end{align*}
    so 
    \begin{align*}
    \Theta_k&=\lim_{m\to\infty}  (\alpha_{m,k})^{1/m\ell_{0}}
    \end{align*}
  \begin{lem}\label{lem:IntersectionNumbers}
    Fix $m>0$. The intersection numbers $\alpha_{m,k}$ on $\bar{\Gamma^{m\ell_{0}}}$ satisfy
    \begin{align*}
    \alpha_{m,0}\ge R^{m\ell_{0}}\alpha_{m,1}\ge\cdots\ge (R^{m\ell_{0}})^{\abs{\bP}-3}\alpha_{m,\abs{\bP}-3}
    \end{align*}
  \end{lem}
  \begin{proof}[Proof of Lemma \ref{lem:IntersectionNumbers}]
      
    Let $\bar{\cJ}$ be any irreducible component of $\bar{\Gamma^{m\ell_{0}}}$, and set
        \begin{align*}
      \alpha_{\bar{\mathcal{J}},k}:=((\bar{\pi_2^{m\ell_{0}}})^*(\psi_p))|_{\bar{\mathcal{J}}}^k\cdot((\bar{\pi_1^{m\ell_{0}}})^*(\psi_p))|_{\bar{\mathcal{J}}}^{\abs{\bP}-3-k}.
    \end{align*}

Since $(\bar{\pi_1^{m\ell_{0}}})^*(\psi_p)$ and
    $(\bar{\pi_2^{m\ell_{0}}})^*(\psi_p)$ are pullbacks of the ample
    hyperplane class $\mathfrak{h}$, they are nef on
    $\bar{\Gamma^{m\ell_{0}}}$ and $\bar{\cJ}$. By \cite{Lazarsfeld2004}, Example 1.6.4,
    $\alpha_{\bar{\mathcal{J}},k}$ is a log-concave function of $k$. 
    
     Note that $\psi_p^{\abs{\bP}-4}=\rho^*(\mathfrak{h}^{\abs{\bP}-4}).$ The class
    $\mathfrak{h}^{\abs{\bP}-4}$ on $\C\emph\P^{\abs{\bP}-3}$ may be represented by a line
    $L$ that does not intersect the codimension-two exceptional locus
    of $\rho.$ Then $\rho^{-1}(L)$ is an irreducible curve in
    $\Mbar_{0,\bP}$ not contained in the boundary and
    $(\bar{\pi_1^{m\ell_{0}}})^{-1}(\rho^{-1}(L))|_{\bar{\cJ}}$ is a curve
    $Y$ none of whose irreducible components lies in the boundary of
    $\bar{\cJ}.$ Since $\bar{\pi_1^{m\ell_{0}}}$ is a flat map, and a
    covering map away from the boundary,
    \begin{align*}
      ((\bar{\pi_1^{m\ell_{0}}})^*(\psi_p^{\abs{\bP}-4}))|_{\bar{\mathcal{J}}}=[Y].
    \end{align*}
By Equation \ref{eq:ComparePsi}, we have
  \begin{align*}
    (\bar{\pi_1^{m\ell_{0}}})^*(\psi_{p})\cdot
    [Y]&=R^{m\ell_{0}}(\bar{\pi_2^{m\ell_{0}}})^*(\psi_p)\cdot [Y]+E_m\cdot [Y]
  \end{align*}
  Since $(\bar{\pi_1^{m\ell_{0}}})^*(\psi_p)$ and
  $(\bar{\pi_2^{m\ell_{0}}})^*(\psi_p)$ are nef on $\bar{\Gamma^{m\ell_{0}}},$ the intersection numbers $(\bar{\pi_1^{m\ell_{0}}})^*(\psi_{p})\cdot
    [Y]$ and $(\bar{\pi_2^{m\ell_{0}}})^*(\psi_p)\cdot [Y]$ are non-negative. Since $E_m$ is entirely supported on the boundary and no component of $Y$ is supported on the boundary, $E_m\cdot [Y]$ is non-negative as well. Thus we obtain:
  \begin{align}\label{eqn:Ineq}
    (\bar{\pi_1^{m\ell_{0}}})^*(\psi_{p})\cdot
    [Y]&\ge R^{m\ell_{0}}(\bar{\pi_2^{m\ell_{0}}})^*(\psi_p)\cdot [Y].
  \end{align}  

       
        Thus,
    \begin{align*}
      \alpha_{\bar{\mathcal{J}},0}\\&=((\bar{\pi_1^{m\ell_{0}}})^*(\psi_p))|_{\bar{\cJ}}^{\abs{\bP}-3}\\&=((\bar{\pi_1^{m\ell_{0}}})^*(\psi_p))|_{\bar{\cJ}}\cdot((\bar{\pi_1^{m\ell_{0}}})^*(\psi_p^{\abs{\bP}-4}))|_{\bar{\cJ}}\\
      &=((\bar{\pi_1^{m\ell_{0}}})^*(\psi_p))|_{\bar{\cJ}}\cdot [Y]\\
      &\ge R^{m\ell_{0}}((\bar{\pi_2^{m\ell_{0}}})^*(\psi_p))|_{\bar{\cJ}}\cdot
      [Y]\quad\quad\quad\quad\quad\quad\quad\mbox{by \eqref{eqn:Ineq}}\\
      &=R^{m\ell_{0}}((\bar{\pi_2^{m\ell_{0}}})^*(\psi_p))|_{\bar{\cJ}}\cdot((\bar{\pi_1^{m\ell_{0}}})^*(\psi_p^{\abs{\bP}-4}))|_{\bar{\cJ}}\\
      &=R^{m\ell_{0}}((\bar{\pi_2^{m\ell_{0}}})^*(\psi_p))|_{\bar{\cJ}}\cdot((\bar{\pi_1^{m\ell_{0}}})^*(\psi_p))|_{\bar{\cJ}}^{\abs{\bP}-4}\\&=R^{m\ell_{0}} \alpha_{\bar{\mathcal{J}},1}.
    \end{align*}
    By log-concavity, we conclude that the intersection numbers $\alpha_{\bar{\mathcal{J}},k}$ satisfy
    \begin{align*}
    \alpha_{\bar{\mathcal{J}},0}\ge R^{m\ell_{0}}\alpha_{\bar{\mathcal{J}},1}\ge\cdots\ge (R^{m\ell_{0}})^{\abs{\bP}-3}\alpha_{\bar{\mathcal{J}},\abs{\bP}-3}
    \end{align*}
    
    Since $$\alpha_{m,k}=\sum_{\substack{\bar{\mathcal{J}}\text{ irreducible}\\\text{component of }\bar{\Gamma^{m\ell_{0}}}}}\alpha_{\bar{\mathcal{J}},k},$$
    the lemma follows.
    
      \end{proof}  
  We now complete the proof of Theorem \ref{thm:DynamicalDegreesNonincreasing}. For all $m$,
  \begin{align*}
    &\alpha_{m,0}\ge R^{m\ell_{0}}\alpha_{m,1}\ge\cdots\ge (R^{m\ell_{0}})^{\abs{\bP}-3}\alpha_{m,\abs{\bP}-3}, \quad\quad\text{so}\\
     &\alpha_{m,0}^{1/m\ell_0}\ge R \alpha_{m,1}^{1/m\ell_0}\ge\cdots\ge R^{\abs{\bP}-3}\alpha_{m,\abs{\bP}-3}^{1/m\ell_0}
    \end{align*}
 The theorem follows by taking the limit as $m$ goes to infinity.
\end{proof}

\section{An application to enumerative algebraic geometry}\label{sec:enumerative}

\begin{prop}\label{prop:enumerative}
Let $\H=\H(\bP,\bP,d,F,\br,\rm)$ be a Hurwitz space with ``target" and ``source" maps $\pi_1$ and $\pi_2$ respectively to $\M_{0,\bP}$. Let 
$$r=\max_{p\in\bP}\rm(p).$$ 
Let $\Gamma$ be any connected component of $\H$. Then $\deg(\pi_1|_{\Gamma})\ge r^{\abs{\bP}-3} \deg(\pi_2|_{\Gamma})$. 
\end{prop}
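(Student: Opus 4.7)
The plan is to adapt the proof of Theorem \ref{thm:DynamicalDegreesNonincreasing}, using Lemma \ref{lem:Cartier} at a single step ($\ell = 1$) with a point of maximal local degree rather than a periodic point. For any Hurwitz self-correspondence, the topological degrees of the target and source maps coincide with the extremal dynamical degrees: $\deg(\pi_1|_\Gamma) = \Theta_0(\Gamma)$ and $\deg(\pi_2|_\Gamma) = \Theta_{|\bP|-3}(\Gamma)$. The sought inequality has the same form as the chained endpoint inequality $\Theta_0 \ge \PI^{|\bP|-3}\Theta_{|\bP|-3}$ of the main theorem, but with $r$ in place of $\PI$. The stronger factor is accessible because a single-step estimate bypasses the periodicity constraint built into the definition of $\PI$.

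Concretely, first apply Lemma \ref{lem:ReduceToConnectedFullyMarked} to reduce to the case where $\Gamma$ is a union of components of a fully marked Hurwitz space $\H(\Pfull,\bP,d,F,\br,\rm)$; since that lemma scales every dynamical degree by the same factor $1/\deg(\nu)$, the ratio $\deg(\pi_1)/\deg(\pi_2)$ is preserved. Let $\bar{\Gamma}$ denote the admissible covers compactification (or its smooth normalization). Choose $p^* \in \bP$ with $\rm(p^*) = r$ and set $q^* = F(p^*) \in \bP$. Lemma \ref{lem:Cartier} at $\ell = 1$ yields on $\bar{\Gamma}$ the Cartier divisor equality
\[
  B := (\bar{\pi_1})^*\psi_{q^*} \;=\; r\cdot(\bar{\pi_2})^*\psi_{p^*} + E \;=\; rA + E,
\]
where $A := (\bar{\pi_2})^*\psi_{p^*}$, both $A$ and $B$ are nef (pullbacks of the basepoint-free classes $\psi_{p^*},\psi_{q^*}$), and $E$ is effective and supported on the boundary of $\bar{\Gamma}$. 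Since the morphism induced by each $\psi_p$ to $\C\P^{|\bP|-3}$ is birational, $\int_{\Mbar_{0,\bP}}\psi_p^{|\bP|-3} = 1$, and the projection formula identifies $\deg(\pi_1|_\Gamma) = B^n\cdot[\bar{\Gamma}]$ and $\deg(\pi_2|_\Gamma) = A^n\cdot[\bar{\Gamma}]$, where $n := |\bP|-3$.

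The desired inequality then comes from the algebraic factorization
\[
  B^n - r^n A^n \;=\; (B - rA)\sum_{j=0}^{n-1} B^{j}(rA)^{n-1-j} \;=\; E\cdot\sum_{j=0}^{n-1} r^{n-1-j} B^{j} A^{n-1-j}.
\]
Writing $E = \sum_i e_i D_i$ as a non-negative combination of prime boundary divisors, each contribution $e_i\, D_i\cdot B^j A^{n-1-j}\cdot[\bar{\Gamma}]$ is, by restriction, a top intersection on the $(n-1)$-dimensional projective variety $D_i$ of the nef classes $B|_{D_i}$ and $A|_{D_i}$, and hence is non-negative. Summing gives $B^n\cdot[\bar{\Gamma}] \ge r^n A^n\cdot[\bar{\Gamma}]$, which is the proposition. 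The main subtlety is that the higher self-intersections $E^i$ for $i \ge 2$ need not be effective classes, so one must keep $E$ to the first power in the factorization before restricting to its prime components and applying nefness there, in close parallel with the log-concavity step in the proof of the main theorem.
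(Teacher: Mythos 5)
Your proof is correct, but it takes a genuinely different route from the paper's. The paper proves the proposition by a relabeling trick: pick $p$ with $\rm(p)=r$, choose a permutation $\sigma\in\Aut(\bP)$ with $\sigma(p)=F(p)$, and pass to the relabeled Hurwitz space $\H^{\new}=\H(\bP,\bP,d,\sigma^{-1}\circ F,\br\circ\sigma,\rm)$, which is isomorphic to $\H$ but has $p$ as a fixed point of the combinatorial map, hence $\PI(\H^{\new})\ge r$; Theorem \ref{thm:DynamicalDegreesNonincreasing} applied to $\Gamma^{\new}$ then gives $\Theta_0\ge r^{\abs{\bP}-3}\Theta_{\abs{\bP}-3}$, and the extremal dynamical degrees are the two topological degrees, which are preserved under the relabeling isomorphisms. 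You instead stay on the single compactified space and never invoke dynamical degrees or iterates: Lemma \ref{lem:Cartier} at $\ell=1$ gives $B=rA+E$ with $E$ effective, the two degrees are the top self-intersections $B^n\cdot[\bar\Gamma]$ and $A^n\cdot[\bar\Gamma]$ via the projection formula and $\psi_p^{\abs{\bP}-3}=1$ on $\Mbar_{0,\bP}$, and the factorization
\begin{align*}
B^n-(rA)^n=E\cdot\sum_{j=0}^{n-1}r^{n-1-j}B^{j}A^{n-1-j}
\end{align*}
together with nonnegativity of top intersections of nef classes on the components of $E$ yields the inequality. What each buys: the paper's proof is a few lines given the main theorem, but it leans on the full asymptotic machinery (limits over all iterates, birational invariance); yours is self-contained at the level of a single iterate, needing only Ionel's comparison, the forgetful-map formula, and Kleiman-type nonnegativity, and it makes transparent why the constant improves from $\PI$ to $r$ when only the endpoint degrees are compared (no periodicity is needed). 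Two minor remarks: the identification $\deg(\pi_1|_{\Gamma})=\Theta_0(\Gamma)$, $\deg(\pi_2|_{\Gamma})=\Theta_{\abs{\bP}-3}(\Gamma)$ that you state as motivation presupposes dominance, but your actual computation does not use it; and you do not in fact need $E$ to be supported on the boundary, only that it is effective, since you pair it against powers of nef classes rather than against a curve chosen to avoid the boundary as in the proof of Lemma \ref{lem:IntersectionNumbers}.
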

\begin{rem}
Here, $r$ is the maximum local degree of $[f:\C\P^1\to\C\P^1]\in\H$ at $p$ where $p$ ranges over $\bP$. Note that every connected component $\Gamma$ arises at $\H_{\phi}$ for some topological branched covering $\phi$. Proposition \ref{prop:enumerative} states that the number of ways a generic configuration of $\bP$-marked points on $\C\P^1$ arises the configuration of marked points on the target $\C\P^1$ of some branched map $[f]\in\H$ of fixed topological type is at least $r^{\abs{\bP}-3}$ times the number of ways it appears as the configuration of marked points on the source $\C\P^1$ of such a map. Since the same inequality holds for every connected component of $\H$/topological type of branched covering, we obtain by summing over connected components that $\deg(\pi_1)\ge r^{\abs{\bP}-3} \deg(\pi_2)$. That is, the number of ways a generic configuration of $\bP$-marked points on $\C\P^1$ arises the configuration of marked points on the target $\C\P^1$ of some branched map $[f]\in\H$ (without fixing topological type) is at least $r^{\abs{\bP}-3}$ times the number of ways it appears as the configuration of marked points on the source $\C\P^1$ of such a map.
\end{rem}
\begin{proof}
Fix $p\in\bP$ with $\rm(p)=r$. Pick a permutation $\sigma\in\Aut(\bP)$ such that $\sigma(p)=F(p)$. The permutation $\sigma$ induces an automorphism $\sigma^{\moduli}$ of $\M_{0,\bP}$ given by $$[\iota:\bP\into\C\P^1]\mapsto[\iota\circ\sigma:\bP\into\C\P^1].$$ 

Set $$\H^{\new}=\H(\bP,\bP,d,\sigma^{-1}\circ F, \br\circ\sigma,\rm).$$ $\H^{\new}$ is the Hurwitz space obtained by using $\sigma$ to relabel the marked points of $[f:\C\P^1\to\C\P^1]\in\H$ on the target $\C\P^1$. The point $p$ is a `fixed point' of maps $[f]\in\H^{\new}$. Note that by construction $\PI(\H^{\new})\ge r$. (In fact $\PI(\H^{\new})= r$, although we will not need this stronger fact.) There is an isomorphism $\sigma^{\hurwitz}$ from $\H$ to $\H^{\new}$ as follows. A point in $\H$ is a map $f:\C\P^1\to\C\P^1$ together with injections $\iota_1$ and $\iota_2$ into the target $\C\P^1$ and source $\C\P^1$ respectively. The isomorphism $\sigma^{\hurwitz}:\H\to\H^{\new}$ takes $[f,\iota_1,\iota_2]\in\H$ to $[f,\iota_1\circ\sigma,\iota_2]$. Denote by $\pi^{\new}_1$ and $\pi^{\new}_2$ respectively the ``target" and ``source" maps from $\H^{\new}$ to $\M_{0,\bP}$. Note that $\sigma^{\moduli}\circ\pi_1=\pi_1^{\new}\circ\sigma^{\hurwitz}$; also that $\pi_2=\pi_2^{\new}\circ\sigma^{\hurwitz}.$ 

Now let $\Gamma$ be some connected component of $\H$; denote by $\Gamma^{\new}$ its isomorphic image in $\H^{\new}$. 
By Theorem \ref{thm:DynamicalDegreesNonincreasing}, we have
\begin{align}\label{eq:twodegrees1}
\deg(\pi_1^{\new}|_{\Gamma^{\new}})=\Theta_0(\Gamma^{\new})\ge r^{\abs{\bP}-3}\Theta_{\abs{bP}-3}(\Gamma^{\new})=\deg(\pi_2^{\new}|_{\Gamma^{\new}})
\end{align} 

Since $\sigma^{\moduli}$ and $\sigma^{\hurwitz}$ are both isomorphisms, we have that $$\deg(\pi_1|_{\Gamma})=\deg(\pi_1^{\new}|_{\Gamma^{\new}})$$ and $$\deg(\pi_2|_{\Gamma})=\deg(\pi_2^{\new}|_{\Gamma^{\new}}).$$

By (\ref{eq:twodegrees1}), this proves the proposition.

\end{proof}
\bibliographystyle{amsalpha}
\bibliography{../HurwitzRefs}
\end{document}